\tikzset{every picture/.style={line width=0.75pt}} 
\newtheorem{theorem}{Theorem}[section] 
\newtheorem{lemma}[theorem]{Lemma}
\newtheorem{claim}[theorem]{Claim}
\newtheorem{definition}[theorem]{Definition}
\DeclareMathOperator*{\conv}{conv }
\DeclareMathOperator*{\coll}{coll}
\title{Helly-type theorems for separated $d$-intervals}
\author{Wei Rao\thanks{Moscow Institute of Physics and Technology, Institutskiy per. 9, 141700, Dolgoprudny, Russia. Email address: raowei1998@gmail.com}    
}
\date{}
\begin{document}

\maketitle

\begin{abstract}
    A separated $d$-interval is defined as a disjoint union of $d$ convex sets from the real line $\mathbb R$. In this paper, we establish a series of Helly-type theorems for convexity spaces derived from separated \(d\)-intervals. Our results encompass the Radon number, Helly number, colorful Helly number, fractional Helly number, colorful fractional Helly theorem, $(p,q)$ theorem, and two kinds of colorful $(p,q)$ theorems for these convexity spaces. The primary tools employed in our proofs involve simplicial complexes and collapsibility.
\end{abstract}

{\bf Keywords} \  Helly theorem; $(p,q)$ theorem; $d$-interval; convexity space

\section{Introduction}
\subsection{Helly-type theorems and $d$-collapsibility}
In 1923, Helly~\cite{helly1923mengen} published a foundational theorem on the intersection patterns of convex sets in $\mathbb R^d$, where $d$ is a positive integer. Specifically, for a finite family \(\mathcal{F}\) of convex sets in \(\mathbb{R}^d\), Helly's theorem states that if every at most \(d+1\) members of \(\mathcal{F}\) have a non-empty intersection, then all members of \(\mathcal{F}\) intersect. 

Over the years, this result has been generalized and extended in various directions. Notable examples include the colorful Helly theorem, first proven by Lovász and later fully detailed by B{\'a}r{\'a}ny~\cite{barany1982generalization}; the fractional Helly theorem introduced by Katchalski and Liu~\cite{katchalski1979problem}; and the colorful fractional Helly theorem, also developed by B{\'a}r{\'a}ny et al.~\cite{barany2014colourful}. Other significant advancements include the \((p,q)\) theorem by Alon and Kleitman~\cite{alon1992piercing}, as well as two variations of the colorful \((p,q)\) theorems by B{\'a}r{\'a}ny and Matoušek \cite{barany2003fractional} and B{\'a}r{\'a}ny et al. \cite{barany2014colourful}, respectively. For more details, refer to \cite{barany2022helly, barany2021combinatorial}. Rather than immediately delving into these theorems, we first introduce the concepts of abstract simplicial complex and $d$-collapsibility, which was introduced by Wegner in \cite{wegner1975d}. The $d$-collapsibility is a powerful tool for establishing a series of Helly-type theorems. 

Let $V$ be a finite set. A collection $K$ of subsets of $V$ is an \textit{abstract simplicial complex} if for any $\alpha \in K$ and $\beta \subseteq \alpha$, we have $\beta \in K$. Elements of $K$ are called \textit{faces} of $K$. The \textit{dimension of a face} $\alpha \in K$ is defined as $\dim \alpha \coloneqq |\alpha|-1$. The \textit{dimension of a complex} $K$ is defined as $\dim K \coloneqq\max \{ \dim \alpha : \alpha \in K \}$. Let $I$ be a finite set of indices and $\mathcal F$ be a family of sets $(C_i)_{i \in I}$. The \textit{nerve} of $\mathcal F$ is $K(\mathcal F) \coloneqq \{J \subseteq I: \bigcap_{j\in J} C_j \neq \emptyset, J \text{ is finite} \}$. One can easily verify that the nerve is an abstract simplicial complex.

A face $\sigma \in K$ is \textit{free} if it is contained in a unique inclusion-maximal face.  An \textit{elementary $d$-collapse} of $K$ is a step in which a free face $\sigma$ with $\dim \sigma \leq d-1$ and all faces containing $\sigma$ are removed. Moreover, we denote the result by $\coll(K, \sigma) =  K \setminus \{\tau \in K: \sigma \subseteq \tau \}$. A simplicial complex $K$ is \textit{$d$-collapsible} if $K$ can be reduced to the empty complex by a sequence of elementary $d$-collapses. For more details on the notion related to $d$-collapsibility and the consequence of $d$-collapsibilty, refer to \cite{tancer2013intersection}. In this paper, we shall use the following recent result.

\begin{theorem}[The optimal colorful fractional Helly theorem for $d$-collapsible complexes \cite{bulavka2021optimal}]\label{theorem: optimal colorful fractional Helly for simplicial complexes}
    Let $K$ be a $d$-collapsible simplicial complex with the set of vertices $N=N_1 \sqcup \dots \sqcup N_{d+1}$ divided into $d+1$ disjoint subsets of sizes $|N_i|=n_i$, respectively. If $K$ contains at least $\alpha n_1 \dots n_{d+1}$ colorful $d$-faces, where $\alpha \in (0,1]$, that is, faces $\sigma$ with $|\sigma \cap N_i|=1$ for every $i \in [d+1]$. Then there is an index $i\in [d+1]$ such that the dimension of the induced subcomplex on $N_i$ is at least $(1-(1-\alpha)^{1/(d+1)})n_i-1$, that is, $\dim K[N_i] \geq (1-(1-\alpha)^{1/(d+1)})n_i-1$.
\end{theorem}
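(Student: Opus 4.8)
The plan is to reduce the theorem to a single inequality for the number of colorful faces and then to prove that inequality using the fact that $d$-collapsible complexes are $d$-Leray, together with the commutative algebra of the Stanley--Reisner ring.

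\emph{Reduction.} Put $m_i:=\dim K[N_i]+1$ (the size of a largest face of $K$ contained in $N_i$) and $b_i:=n_i-m_i\ge 0$. The conclusion of the theorem for an index $i$ is precisely the inequality $b_i\le(1-\alpha)^{1/(d+1)}n_i$, so if it failed for every $i$ we would get $\prod_i(b_i/n_i)>1-\alpha$, i.e.\ $\prod_i b_i>(1-\alpha)\prod_i n_i$. Hence it suffices to establish
\[
\#\{\text{colorful }d\text{-faces of }K\}\ \le\ \prod_{i=1}^{d+1}n_i\ -\ \prod_{i=1}^{d+1}b_i ,
\]
because combining this with the hypothesis gives $\alpha\prod_i n_i\le\#\{\text{colorful }d\text{-faces}\}\le\prod_i n_i-\prod_i b_i<\alpha\prod_i n_i$, a contradiction. (If some $n_i=0$ the conclusion is trivial, and if some $b_i=0$ the displayed inequality is trivial, so we may assume all $n_i,b_i>0$.)

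\emph{The inequality.} Fix a field $\mathbb F$ and consider the Stanley--Reisner ring $R=\mathbb F[x_v:v\in N]/I_K$, multigraded over $\mathbb Z^{d+1}$ by $\deg x_v=e_i$ for $v\in N_i$. In this grading the standard monomials of multidegree $(1,\dots,1)$ are exactly the colorful $d$-faces of $K$, so their number is the value at $(1,\dots,1)$ of the multigraded Hilbert function of $R$; likewise each $\dim K[N_i]$ is visible from the Hilbert function along the $i$-th coordinate axis. Writing the multigraded Hilbert series as $\mathcal K(t)/\prod_i(1-t_i)^{n_i}$ and extracting the coefficient of $t_1\cdots t_{d+1}$ yields an identity
\[
\#\{\text{colorful }d\text{-faces}\}=\sum_{S\subseteq[d+1]}c_S\prod_{i\notin S}n_i ,
\]
where, by Hochster's formula, each $c_S$ is an alternating sum of reduced Euler characteristics $\widetilde\chi(K[W])$ over the vertex sets $W$ that meet every color of $S$ in one vertex and every other color in none. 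Since $K$ is $d$-collapsible it is $d$-Leray, so $\widetilde H_j(K[W])=0$ for all $W$ and all $j\ge d$; this kills the high–homological-degree multigraded Betti numbers of $R$, and — fed together with the information that $K[N_i]$ carries a face of size $m_i$ — is what one uses to bound the alternating sum by $\prod_i n_i-\prod_i b_i$. An alternative, more combinatorial route is to pass first to a shifted complex that has the same number of colorful $d$-faces, the same monochromatic dimensions $\dim K[N_i]$, and is still $d$-Leray; for such a complex the colorful $d$-faces form an explicit down-set in $N_1\times\cdots\times N_{d+1}$ and the bound can be verified by hand.

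The crux, on either route, is keeping the coloring under control: ordinary algebraic shifting and the usual regularity/Betti-number estimates see only the total $f$-vector, so the real work is to run the shifting — or the homological bookkeeping — in a way that simultaneously preserves the colorful $d$-face count, preserves each $\dim K[N_i]$, and retains $d$-Leray-ness. The second delicate point is sharpness: reaching exactly $\prod_i n_i-\prod_i b_i$, rather than merely a weaker bound, is what forces the full strength of $d$-collapsibility and rules out naive arguments such as exhibiting a "combinatorial box'' of non-faces, which already fails on one-dimensional examples. Once the displayed inequality is proved, the theorem follows from the reduction above.
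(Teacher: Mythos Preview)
The paper does not prove this theorem: it is quoted from \cite{bulavka2021optimal} and used as a black box (to derive Theorem~\ref{cfh} from Theorem~\ref{theorem: collapsibility}). There is therefore no ``paper's own proof'' to compare against.

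As for your proposal on its own terms: the reduction to the inequality
\[
\#\{\text{colorful }d\text{-faces of }K\}\ \le\ \prod_{i=1}^{d+1}n_i\ -\ \prod_{i=1}^{d+1}(n_i-m_i)
\]
is correct and is exactly the first step in Bulavka--Goodarzi--Tancer. But what follows is a plan, not a proof. You set up the multigraded Stanley--Reisner ring, invoke Hochster's formula and $d$-Leray-ness, and then assert that this ``is what one uses to bound the alternating sum''; the actual bounding is never carried out. Likewise for the shifting route: you note correctly that ordinary algebraic shifting does not respect the coloring, and that one would need a shifting operation preserving simultaneously the colorful $d$-face count, each $\dim K[N_i]$, and $d$-Leray-ness --- but you do not construct such an operation. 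That construction (a color-compatible version of exterior algebraic shifting) is precisely the main technical contribution of \cite{bulavka2021optimal}, and it is not at all routine: one has to choose the generic change of basis block-diagonally with respect to the color classes and then redo the Bj\"orner--Kalai type analysis to see that $d$-Leray-ness survives and that the colorful face numbers are preserved.

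So: correct reduction, correct identification of the two viable strategies and of the obstacles on each, but the proposal stops exactly where the real work begins. As written it is an accurate roadmap to the Bulavka--Goodarzi--Tancer proof rather than a proof.
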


\subsection{Separated $d$-intervals}
Now, we are ready to introduce our research subjects and some related results. A \textit{homogeneous $d$-interval} is a union of at most $d$ convex sets from the real line $\mathbb R$. A \textit{separated/nonhomogeneous $d$-interval} is a homogeneous $d$-interval consisting of $d$ convex set components $I^{(1)},\dots,I^{(d)}$ such that $I^{(i+1)} \subseteq (i, i+1)$ for $0\leq i\leq d-1$, where $(i, i+1)$ is an open interval. For convenience, we employ the following equivalent definition.

\begin{definition}\label{de8}
    A separated $d$-interval $I$ is a disjoint union of $d$ convex sets from the real line $\mathbb R$, that is, $I = \bigsqcup_{i \in [d]} I^{(i)}= \bigcup_{i \in [d]} \{(x, i) \in \mathbb R \times [d]: x\in I^{(i)} \subseteq \mathbb R \}$, where all $I^{(i)}$ are convex sets in $\mathbb R$. Moreover, we say $I^{(i)}$ is the $i$-th level of $I$ and any point $(x,i)$, where $x \in \mathbb R$, is in the $i$-th level.
\end{definition}

The reader may realize that there is some relation between the axis-parallel boxes in $\mathbb R^d$ and the separated $d$-intervals. For more details, refer to Section~\ref{section: relation with axis-parallel boxes}.

Let $\mathcal F$ be a finite family of subsets on the ground set $X$. The \textit{transversal number} of $\mathcal F$, denoted by $\tau(\mathcal F)$, is the minimum integer $k$ such that there is a set $S \subseteq X$ of size at most $k$ such that $S \cap C \neq \emptyset$ for any $C \in \mathcal F$. The \textit{matching number} of $\mathcal F$, denoted by $\nu(\mathcal F)$, is the maximum integer $k$ such that there is a subfamily $\mathcal F' \subseteq \mathcal F$ of size $k$, consisting of pairwise disjoint sets. Tardos \cite{tardos1995transversals} and Kaiser \cite{kaiser1997transversals} proved the following results.

\begin{theorem}[Tardos \cite{tardos1995transversals}, Kaiser \cite{kaiser1997transversals}]\label{tar}
    If $\mathcal F$ is a finite family of homogeneous $d$-intervals, then $\tau(\mathcal F) \leq (d^2-d+1) \nu (\mathcal F)$. If $\mathcal F$ is a finite family of separated $d$-intervals, then $\tau(\mathcal F) \leq (d^2-d) \nu (\mathcal F)$.
\end{theorem}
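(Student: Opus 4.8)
The plan is to prove this classical result by the topological (Borsuk--Ulam) method of Tardos for $d=2$ and Kaiser for general $d$; I will describe the argument for the homogeneous bound $\tau(\mathcal{F}) \le (d^2-d+1)\,\nu(\mathcal{F})$ and indicate where the refinement to $(d^2-d)\,\nu(\mathcal{F})$ for separated $d$-intervals comes from. The first step is a routine discretization. Since every level of every $d$-interval in the finite family $\mathcal{F}$ is a convex subset of $\mathbb{R}$, a compactness/perturbation argument lets us assume each level is a closed interval with integer endpoints and that only finitely many endpoints occur overall; a minimum transversal may then be chosen among the finitely many ``cell representatives'' obtained by cutting each of the $d$ copies of $\mathbb{R}$ at all occurring endpoints. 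This reformulates the claim as a statement about a hypergraph whose vertex set is partitioned into $d$ linearly ordered \emph{tracks}, with every hyperedge meeting each track in a (possibly empty) order-interval, and neither $\tau$ nor $\nu$ changes under this reduction.

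The heart of the proof is a Borsuk--Ulam--type lemma. Assuming for contradiction that $\tau(\mathcal{F}) \ge (d^2-d+1)\,\nu(\mathcal{F})+1$, set $\nu=\nu(\mathcal{F})$ and build from $\mathcal{F}$ a configuration space parametrising all ways of placing $(d^2-d+1)\nu$ ``cut points'' over the $d$ tracks --- suitably a join of spheres, or an auxiliary simplicial complex --- carrying a natural free symmetry coming from the combinatorics of the $d$ levels and of the $\nu+1$ ``slots''. Because $(d^2-d+1)\nu$ cuts fail to pierce $\mathcal{F}$, one manufactures an equivariant map from this configuration space to a sphere of too small a dimension; Borsuk--Ulam then yields $\nu+1$ pairwise disjoint members of $\mathcal{F}$, contradicting the definition of $\nu$. (In Kaiser's formulation the direct use of Borsuk--Ulam is repackaged: one proves that a certain auxiliary complex is highly connected and then invokes a topological Hall/defect theorem.) The constant comes out of counting cuts: each of the $d$ tracks forces roughly $d-1$ cuts per slot, and a homogeneous $d$-interval additionally creates a ``wrap-around'' incidence tying its last component back to its first, costing one more cut overall; for separated $d$-intervals that wrap-around is absent, which is exactly what trims $d^2-d+1$ down to $d^2-d$.

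It remains to assemble the pieces --- the base case $\nu(\mathcal{F})=0$ forces $\mathcal{F}=\emptyset$ and $\tau(\mathcal{F})=0$, and one checks that discretization preserved $\tau$ and $\nu$ exactly. The genuine obstacle is the topological lemma, and what makes it hard is really \emph{the optimal constant}: a qualitative bound $\tau(\mathcal{F})=O_d(\nu(\mathcal{F}))$ is comparatively soft, since separated $d$-intervals have bounded VC-dimension and a bounded (fractional) Helly number --- the latter is established elsewhere in this paper --- and a bounded fractional Helly number already forces a $(p,q)$-type piercing bound; but every such route gives a constant far from $d^2-d$. Pinning down the exact value needs the equivariant topology above: one must both verify the precise connectivity of the auxiliary complex and follow the interaction of the $d$ tracks carefully enough to waste nothing in the count, which is precisely why collapsibility alone --- and hence Theorem~\ref{theorem: optimal colorful fractional Helly for simplicial complexes} applied to the nerve --- does not suffice.
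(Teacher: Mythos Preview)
The paper does not prove Theorem~\ref{tar} at all; it is quoted as a known result of Tardos and Kaiser and used only as background. There is therefore no proof in the paper against which to compare your proposal.

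As for the proposal itself: your outline is indeed in the spirit of Kaiser's topological argument, but what you have written is a narrative, not a proof. The configuration space is never actually defined (``suitably a join of spheres, or an auxiliary simplicial complex'' is not a construction), the equivariant map is asserted rather than built, and the connectivity computation that produces the exact constant is entirely absent. Your explanation of where $d^2-d+1$ versus $d^2-d$ comes from (``roughly $d-1$ cuts per slot'' plus a ``wrap-around'') is a heuristic reading of the answer, not a derivation; in Kaiser's proof the constant emerges from a precise connectivity bound on a complex of multicolorings, and the separated case is handled by a genuinely different KKM-type argument, not by simply deleting one cut. If this were meant to stand as a self-contained proof it would have substantial gaps; for the purposes of the present paper, however, no proof is required since the result is only cited.
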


Then Frick and Zerbib \cite{frick2019colorful} established the colorful version. Recently, a sparse colorful version and a matroid colorings version were proven in \cite{mcginnis2024sparse} and \cite{mcginnis2024matroid}, respectively.

\subsection{Abstract convexity spaces and main results}
In this paper, we shall study the convexity spaces constructed on separated $d$-intervals. Moreover, throughout the paper, we will use the term $d$-intervals to refer to separated $d$-intervals.

\begin{definition}
    A convexity space is a set system $\mathcal C$ on a ground set $X$ that satisfies the following three properties:
    \begin{enumerate}
        \item $\emptyset, X \in \mathcal C$;
        \item If $\emptyset \neq \mathcal D \subseteq \mathcal C$, then $\bigcap \mathcal D \in \mathcal C $;
        \item If $\emptyset \neq \mathcal D \subseteq \mathcal C$ is totally ordered by inclusion, then $\bigcup \mathcal D \in \mathcal C$.
    \end{enumerate}
\end{definition}

Let $(X, \mathcal C)$ be a convexity space. For any subset $Y \subseteq X$, the \textit{convex hull} of $Y$ is defined as $\conv Y= \bigcap \{K: K \in \mathcal C, Y \subseteq K \}$. We call the members of $\mathcal C $ \textit{convex sets}. For example, when $X = \mathbb R^d$ and $\mathcal C (\mathbb R^d)$ is the set of all the usual convex sets in $\mathbb R^d$, space $(\mathbb R^d, \mathcal C(\mathbb R^d))$ is the standard convexity space, which is widely studied in convex geometry and combinatorial convexity. For a comprehensive introduction to the convexity space, refer to \cite{van1993theory, holmsen2024helly}.

Let $\mathbb R \times [d]=\bigcup_{i \in [d]} \{(x, i): x \in \mathbb R \}$ and $P \subseteq \mathbb R \times [d]$. Let $\mathcal C_{\equiv}(P)$ be the set of all subsets of $P$ of the form $I \cap P$ where $I$ is a $d$-interval, that is, \[ \mathcal C_{\equiv}(P) = \{I \cap P: I \subseteq \mathbb R \times [d] \text{ is a $d$-interval}\} . \]

One can easily verify that $(P,\mathcal C_{\equiv} (P))$ is a convexity space. Note that when $P = \mathbb R \times [d]$, set $\mathcal C_{\equiv} (P)$ is the set of usual separated $d$-intervals defined above, which has been extensively studied; see \cite{bjorner2017using}. Throughout the paper, we denote the convexity spaces constructed in this manner with respect to some $P\subseteq \mathbb R \times [d]$ by $(P,\mathcal C_{\equiv}(P))$ and call them \textit{$\equiv$-convexity space}. 

To the best of our knowledge, the collapsibility of $d$-intervals has not been studied in prior works. One of our results is about the collapsibility of $d$-intervals and can be stated as follows.

\begin{theorem}\label{theorem: collapsibility}
    Let $P \subseteq \mathbb R \times [d]$ and $(P,\mathcal C_{\equiv} (P))$ be a $\equiv$-convexity space. For any finite family $\mathcal C \subseteq \mathcal C_{\equiv} (P)$, the nerve of $\mathcal C$ is $(2d-1)$-collapsible.
\end{theorem}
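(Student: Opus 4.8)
The plan is to prove a stronger, "collapsibility with an ordering" statement suitable for induction, in the spirit of Wegner's original argument for axis-parallel boxes, adapted to the $d$-interval setting. Recall that a usual family of convex sets in $\mathbb{R}$ (i.e. intervals) has a $1$-collapsible nerve: one can repeatedly find the interval with the leftmost right-endpoint, observe that the faces of the nerve containing that interval's vertex form a simplex (because the intervals meeting it pairwise all contain its right endpoint, hence share a common point), collapse it, and recurse. For a $d$-interval $C = \bigsqcup_{i\in[d]} C^{(i)}$, each level $C^{(i)} \cap P$ is an "interval trace" on the line $\mathbb{R}\times\{i\}$, so intuitively each level contributes a budget of $1$ to the collapsibility, but we must be careful: two $d$-intervals can intersect in one level while missing in another, so the nerve does not factor as a join of $d$ one-dimensional nerves.

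First I would set up the induction on the number of sets in $\mathcal{C}$. For the inductive step, among all levels $i\in[d]$ and all sets $C\in\mathcal{C}$ whose $i$-th level is nonempty on $P$, pick the pair $(C,i)$ minimizing the right endpoint of $C^{(i)}\cap P$ (breaking ties arbitrarily); call this right endpoint $p$, a point of $\mathbb{R}\times\{i\}$. Let $\sigma$ be the face of the nerve $K=K(\mathcal{C})$ consisting of those vertices of $K$ whose corresponding $d$-interval contains the point $p$. The key claim is that $\sigma$ is a free face of $K$ of dimension at most $2d-1$, i.e. $|\sigma|\le 2d$, and that $\coll(K,\sigma)=K(\mathcal{C}\setminus\{C\})$ so that the induction hypothesis applies after the collapse. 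Freeness: if $\tau\in K$ with $\tau\supseteq\sigma$ corresponds to a subfamily all of whose members pairwise intersect — then I must show they have a common point, which would make $\sigma\cup\{\text{one more vertex}\}$... actually the clean route is to show that every maximal face of $K$ containing $\sigma$ is the same one, namely the set of all vertices whose $d$-interval contains $p$; equivalently, that any $d$-interval meeting all members of $\sigma$ also passes through $p$. This last point is where the minimality of $p$ enters: a $d$-interval $D$ meeting $C$ must meet it in some level $j$; if that level is $i$, then $D^{(i)}$ meets $C^{(i)}$ and, since $p$ is the leftmost right-endpoint among all level-$i$ traces in the family, $D^{(i)}$ contains $p$; but if $D$ meets $C$ only in other levels, this argument fails, which is exactly the obstruction.

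To handle that obstruction, I expect the correct statement is a relativized one: rather than collapsing the vertex of $C$ in one step, collapse a chain of $\le d$ faces, one per level, peeling off $C$ "one level at a time," or — cleaner — argue that the nerve of $\mathcal{C}$ embeds as a subcomplex of (or retracts onto) the nerve of an associated family of axis-parallel boxes in $\mathbb{R}^d$, for which Wegner's theorem gives $d$-collapsibility, and then account for the extra factor. Since $2d-1 = d + (d-1)$, the bound suggests: the "box part" contributes $d$, and a separate argument contributes $d-1$ more, matching the $(d^2-d)$ versus $(d^2-d+1)$ saving in Theorem~\ref{tar} for separated versus homogeneous intervals. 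Concretely, I would map each $d$-interval $C=\bigsqcup C^{(i)}$ to the box $\prod_{i\in[d]} C^{(i)}$ (with $C^{(i)}=\emptyset$ replaced by, say, all of $\mathbb{R}$, or better, track emptiness separately), show two $d$-intervals intersect iff some coordinate interval of the boxes intersects \emph{and} that coordinate is "active" for both — and then run a collapsing order that first exhausts one coordinate of the box structure at a time. The main obstacle is precisely this bookkeeping: ensuring that when we fix the collapsing order globally (by sweeping endpoints level by level), the face we remove at each step is genuinely free in the \emph{current} complex and has dimension $\le 2d-1$, and that the leftover is again the nerve of a family of $d$-interval traces so the induction closes. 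I would prove the dimension bound by noting that a maximal face containing the collapsed face corresponds to a subfamily pairwise intersecting and passing through a fixed small set of witness points (at most $2d$ of them, two per "critical" level in the worst separated case), via a Helly-type argument on each line together with the disjointness of levels.
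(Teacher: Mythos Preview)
Your proposal has a genuine gap at the central step. In your first attempt you define $\sigma$ as the set of all vertices whose $d$-interval contains the chosen point $p$; but that face can have arbitrarily many vertices (any number of $d$-intervals may pass through $p$), so the claimed bound $|\sigma|\le 2d$ is simply false, and $\coll(K,\sigma)$ is not $K(\mathcal C\setminus\{C\})$ either. You correctly notice the obstruction (a set $D$ may meet $C$ only in a level $j\neq i$), but your suggested fixes --- peeling levels, or embedding into boxes --- remain too vague to yield a free face of dimension $\le 2d-2$, and the closing sentence about ``at most $2d$ witness points'' is an assertion rather than an argument.

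The paper's approach resolves exactly this. Instead of minimizing a single right-endpoint, it assigns to every \emph{face} (i.e.\ every nonempty intersection $\hat C$) the vector $f(\hat C)=(x_1,\dots,x_d)$ of level-wise right-endpoints, using $-\infty$ for empty levels, and takes the face $\sigma$ whose $f$-value is \emph{lexicographically} minimal, of minimal size among those. Freeness then follows cleanly: any $C$ meeting $\hat C_0$ must contain the point $(a_i,i)$ at the first non-$-\infty$ coordinate, since otherwise $f(C\cap\hat C_0)<_{\text{lex}} f(\hat C_0)$. The dimension bound $|\sigma|\le 2d-1$ is exactly Lemma~\ref{lem2} (with $k=1$): the $f$-value of any intersection is already achieved by at most $2d-1$ of its constituent sets, because each non-empty level needs one witness for its right-endpoint and each empty level needs two witnesses for the disjointness. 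This ``lex-min on faces plus a size-reduction lemma'' is the missing idea in your proposal; once you have it, the inductive structure you outlined goes through.
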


This result establishes a series of Helly-type theorems, which shall be stated latter. Combining the consequences of Theorem~\ref{theorem: collapsibility} and some improvements gives another main result -- Theorem~\ref{theorem: Helly-type theorems} in Section~\ref{section: Helly-type notions}, which contains main Helly-type theorems that we want to show.

This paper is organized as follows. In Section~\ref{section: Helly-type notions}, we shall introduce all the aforementioned Helly-type theorems in the context of convexity spaces. Then we state all Helly-type theorems for convexity spaces constructed on separated $d$-intervals. In Section $3$, we state all the necessary lemmas that we need to use or prove. Moreover, we introduce the proof chain and the results that are needed to be proven. From Section $4$ to Section $9$, we give the proofs. In Section $10$, we discuss some results that we do not want to put in the main body. Moreover, we discuss the relation between separated $d$-intervals and axis-parallel boxes.

\section{Helly related notions and main results}\label{section: Helly-type notions}

Once the concept of convexity spaces is established, many Helly-type notions can be generalized to these spaces. One fundamental concept is the Radon partition and its associated Radon number, introduced in Radon's theorem \cite{radon1921mengen}, which is commonly employed in the proof of Helly's theorem.

\begin{definition}[Radon partition and Radon number]
    Let $(X, \mathcal C)$ be a convexity space and $Y \subseteq X$. A Radon partition of $Y$ is a partition $Y = A \cup B$ such that $\conv A \cap  \conv B \neq \emptyset$. The Radon number $r(X,\mathcal C)$ is the minimal integer $n$ (if it exists) such that every subset $Y \subseteq X$ of size at least $n$ has a Radon partition.
\end{definition}

In the standard convexity space $(\mathbb R^d, \mathcal C(\mathbb R^d))$, Radon's theorem \cite{radon1921mengen} asserts that $r(\mathbb R^d, \mathcal C(\mathbb R^d)) = d+2$. Onn \cite{onn1991geometry} and Sierksma \cite{sierksma1976relationships} proved that $5 \cdot 2^{d-2} +1 \leq r(\mathbb Z^d, \mathcal C(\mathbb Z^d) ) \leq d(2^d-1)+3$, where $\mathcal C(\mathbb Z^d)$ is the set of all subsets of $\mathbb Z^d$ of the form $C \cap \mathbb Z^d$ where $C \subseteq \mathbb R^d$ is a convex set, that is, \[ \mathcal C(\mathbb Z^d) = \{C \cap \mathbb Z^d: C\subseteq \mathbb R^d \text{ is a convex set}\} . \] Edwards and Sober{\'o}n \cite{edwards2024extensions} proved $r(P, \mathcal C_{\Box}(P)) \leq 2d+1$, where $P \subseteq \mathbb R^d$ and $\mathcal C_{\Box}(P)$ is the set of all subsets of $P$ of the form $B \cap P$ where $B \subseteq \mathbb R^d$ is an axis-parallel box, that is, \[ \mathcal C_{\Box}(P) = \{B \cap P: B\subseteq \mathbb R^d \text{ is an axis-parallel box}\} . \]
By \textit{axis-parallel box} (or simply a \textit{box}), we mean a set $B$ in $\mathbb R^d$ that is the Cartesian product of $d$ non-empty convex sets of $\mathbb R$. Moreover, when $P = \mathbb R^d$, Eckhoff \cite{eckhoff2000partition} showed that $r(\mathbb R^d, \mathcal C_{\Box}(\mathbb R^d)) = \Theta (\log d)$.

\begin{definition}[Helly number]\label{de1}
    Let $(X, \mathcal C)$ be a convexity space. The Helly number $h(X, \mathcal C)$ is the minimal integer $n$ (if it exists) such that for any finite family $\mathcal F \subseteq \mathcal C$ of convex sets, if every at most $n$ members of $\mathcal F$ intersect, then all members intersect. 
\end{definition}

Helly's theorem asserts $h(\mathbb R^d, \mathcal C(\mathbb R^d)) = d+1$. Doignon \cite{doignon1973convexity} proved that $h(\mathbb Z^d, \mathcal C(\mathbb Z^d)) = 2^d$. It is known that $h(\mathbb R^d, \mathcal C_{\Box}(\mathbb R^d)) =2$. Halman \cite{halman2008discrete} demonstrated that $h(P, \mathcal C_{\Box}(P)) \leq 2d$.

\begin{definition}[Colorful Helly number]\label{de2}
    Let $(X, \mathcal C)$ be a convexity space. Finite subfamilies $\mathcal F_1,\dots, \mathcal F_n$ of $\mathcal C$ have the colorful Helly property if every colorful $n$-tuples of them intersect, that is, $\bigcap_{i=1}^n C_i \neq \emptyset$ for all $C_1 \in \mathcal F_1 , \dots, C_n \in \mathcal F_n$. 
    
    The colorful Helly number $h_c(X, \mathcal C)$ is the minimal integer $n$ (if it exists) such that for any $n$ finite families $\mathcal F_1,\dots ,\mathcal F_n \subseteq \mathcal C$ of convex sets, if they have the colorful Helly property, then there exists a family whose intersection is non-empty. 
\end{definition}

The colorful Helly's theorem asserts that $h_c(\mathbb R^d, \mathcal C(\mathbb R^d)) = d+1$. It is known that $h_c(\mathbb R^d, \mathcal C_{\Box}(\mathbb R^d)) = d+1$. Edwards and Sober{\'o}n \cite{edwards2024extensions} proved that $h_c(P, \mathcal C_{\Box}(P)) \leq 2d$.

\begin{definition}[Fractional Helly number]\label{de3}
    Let $(X, \mathcal C)$ be a convexity space. We say $(X, \mathcal C)$ admits a fractional Helly theorem for $k$-tuples, if there exist an integer $k$ and a function $\beta : (0,1)\to (0,1)$ such that every finite family $\mathcal F \subseteq \mathcal C$ with at least $\alpha \binom{|\mathcal F|}{k}$ intersecting $k$-tuples, contains an intersecting subfamily of size at least $\beta(\alpha)|\mathcal F|$. 
    
    The fractional Helly number $h_f(X, \mathcal C)$ is the minimal integer $n$ (if it exists) such that $(X,\mathcal C)$ admits the fractional Helly theorem for $n$-tuples. 
\end{definition}

The fractional Helly theorem asserts $h_f(\mathbb R^d, \mathcal C(\mathbb R^d)) = d+1$. B{\'a}r{\'a}ny and Matou{\v{s}}ek \cite{barany2003fractional} proved that $h_f(\mathbb Z^d, \mathcal C(\mathbb Z^d)) = d+1$. It is known that $h_f(\mathbb R^d, \mathcal C_{\Box}(\mathbb R^d)) = d+1$. Edwards and Sober{\'o}n \cite{edwards2024extensions} proved $h_f(P, \mathcal C_{\Box}(P)) = d+1$. Combining the colorful Helly theorem and the fractional Helly theorem leads to the colorful fractional Helly theorem, which takes the following form.

\begin{definition}[Colorful fractional Helly theorem]\label{de4}
    Let $(X, \mathcal C)$ be a convexity space. We say $(X, \mathcal C)$ admits a colorful fractional Helly theorem for colorful $k$-tuples, if there exist an integer $k$ and a function $\beta : (0,1)\to (0,1)$ such that for every finite families $\mathcal F_1,\dots, \mathcal F_k \subseteq \mathcal C$ of sizes $n_1,\dots,n_k$ with at least $\alpha n_1\dots n_k$ intersecting colorful $k$-tuples, there is a family $\mathcal F_i$ that contains an intersecting subfamily of size at least $\beta(\alpha)|\mathcal F_i|$. 
\end{definition}

Bulavka, Goodarzi, Tancer \cite{bulavka2021optimal} established an optimal colorful fractional Helly theorem for $(\mathbb R^d,\mathcal C(\mathbb R^d))$. Moreover, any convexity space $(X,\mathcal C)$, satisfying the fractional Helly property for $k$-tuples, admits the following $(p,q)$ theorem for all $p \geq q \geq k$ \cite{alon2002transversal}.

\begin{definition}[$(p,q)$ theorem]\label{de5}
    Let $(X,\mathcal C)$ be a convexity space and $p\geq q$ be integers. A finite family $\mathcal F \subseteq \mathcal C$ has the $(p,q)$ property if among any $p$ members of $\mathcal F$ some $q$ of them intersect. We say $(X,\mathcal C)$ admits a $(p,q)$ theorem if there exists an integer $N=N(p,q)$ such that for every finite family $\mathcal F \subseteq \mathcal C$ with the $(p,q)$ property, there is a set $S \subseteq X$ of size at most $N$ such that any member $C \in \mathcal F$ intersects $S$.
\end{definition}

\begin{theorem}[Alon et al. \cite{alon2002transversal}]\label{fh2pq}
    Let $(X,\mathcal C)$ be a convexity space and $p\geq q$ be integers. If $(X,\mathcal C)$ admits a fractional Helly theorem for $k$-tuples, it also admits a $(p,q)$ theorem for all $p\geq q\geq k$.
\end{theorem}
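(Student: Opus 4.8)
The plan is to follow the Alon--Kleitman scheme for the $(p,q)$ theorem, in the abstract form due to Alon et al.\ \cite{alon2002transversal}. Fix $p \ge q \ge k$, let $\beta$ be the function witnessing the fractional Helly theorem for $k$-tuples (Definition~\ref{de3}), and let $\mathcal F \subseteq \mathcal C$ be a finite family with the $(p,q)$ property; write $n = |\mathcal F|$. If $n \le p-1$ then one point from each member is a transversal, so I may assume $n \ge p$, and I will bound $\tau(\mathcal F)$ by a function $N = N(p,q)$ in three steps.

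First I would show that the $(p,q)$ property forces a positive fraction of the $k$-tuples of $\mathcal F$ to be intersecting. If some $q$ members of $\mathcal F$ have a common point then so does every $k$-subset of them, so the $(p,q)$ property implies that every $p$-subset of $\mathcal F$ contains an intersecting $k$-subset; equivalently, the $k$-uniform hypergraph $H$ of intersecting $k$-tuples has independence number at most $p-1$. A uniformly random $p$-subset of $\mathcal F$ therefore contains an edge of $H$ with probability $1$, and a union bound over the edges of $H$ gives $1 \le |H|\binom{n-k}{p-k}/\binom{n}{p}$, hence $|H| \ge \binom{n}{k}/\binom{p}{k}$. So at least an $\alpha_0$-fraction of the $k$-tuples of $\mathcal F$ are intersecting, with $\alpha_0 = \binom{p}{k}^{-1}$.

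Second I would bound the fractional transversal number. Let $\tau^*(\mathcal F)$ and $\nu^*(\mathcal F)$ be the fractional transversal and fractional matching numbers (the optima of the natural covering linear program and its dual, equal by LP duality), and let $w : \mathcal F \to \mathbb R_{\ge 0}$ realize $\nu^*$, with $W = \sum_C w(C)$ and $\sum_{C \ni x} w(C) \le 1$ for every $x$. Replacing each $C$ by roughly $M w(C)$ identical copies and applying the first step and then the fractional Helly theorem to this blow-up --- the counting above is ``local'' and hence survives passing to weights, and the fractional Helly property depends only on incidence counts and hence is insensitive to multiplicities --- produces a point $x$ with $\sum_{C \ni x} w(C) \ge \beta(\alpha_0)W - o(1)$ as $M \to \infty$. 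Comparing with the matching constraint $\sum_{C \ni x} w(C) \le 1$ yields $\tau^*(\mathcal F) = W \le 1/\beta(\alpha_0) =: c_1(p,q)$.

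Third I would round this fractional transversal to an integral one. Normalizing an optimal fractional transversal to a probability measure $\mu$ on a finite subset of $X$ with $\mu(C) \ge 1/c_1$ for every $C \in \mathcal F$, any set that meets every convex set of $\mu$-measure at least $1/c_1$ --- a weak $(1/c_1)$-net for $\mu$ --- is a transversal of $\mathcal F$; so it suffices that $(X,\mathcal C)$ admit weak $\varepsilon$-nets of size bounded by a function of $\varepsilon$ alone, and then $N(p,q)$ is that bound at $\varepsilon = 1/c_1$. This is where the fractional Helly hypothesis enters a second time, by bootstrapping: the family of $\varepsilon$-heavy convex sets on a finite point set automatically satisfies a $\bigl(\lceil (k-1)/\varepsilon \rceil + 1,\, k\bigr)$ property by pigeonhole and has fractional transversal number at most $1/\varepsilon$, which one feeds back into the first two steps. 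The hard part will be precisely this last step: a naive greedy extraction of the net only gives size $O(\log n)$, so one must show that the integrality gap $\tau/\tau^*$ is bounded using the fractional Helly property alone, which is the technical core of \cite{alon2002transversal}. (For the concrete spaces $(P,\mathcal C_{\equiv}(P))$ studied here one can instead use the bounded VC dimension of separated $d$-intervals together with the Haussler--Welzl $\varepsilon$-net theorem, but the abstract statement requires the fractional-Helly argument.)
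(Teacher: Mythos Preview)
The paper does not give its own proof of Theorem~\ref{fh2pq}: it is quoted from \cite{alon2002transversal} and used as a black box (to deduce item~5 of Theorem~\ref{theorem: Helly-type theorems} from item~4). So there is no in-paper proof to compare against. What the paper \emph{does} record are the two ingredients you use, stated separately as Theorem~\ref{bou1} (fractional Helly $\Rightarrow$ $\tau \le f(\tau^*)$) and Theorem~\ref{bou2} (the blow-up characterization of $\nu^*$), again without proof; your Step~2 is exactly Theorem~\ref{bou2} combined with Step~1 and the fractional Helly hypothesis, and your Step~3 is exactly Theorem~\ref{bou1}.

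Your outline is the correct Alon--Kleitman scheme and matches the argument in \cite{alon2002transversal}. One small remark on your Step~3 bootstrap: the family of all $\varepsilon$-heavy convex sets is typically infinite, so one cannot literally feed it back into Steps~1--2, which assume a finite family. The way \cite{alon2002transversal} handles this is to work only with finitely many ``test'' convex sets (convex hulls of subsets of the finite support of $\mu$) and to iterate, extracting one net point at a time via the fractional Helly theorem applied to the finite family of heavy hulls; you have the right idea but the finiteness bookkeeping is part of the ``technical core'' you allude to. Your parenthetical shortcut via bounded VC dimension for the concrete spaces $(P,\mathcal C_{\equiv}(P))$ is also valid.
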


Alon and Kleitman \cite{alon1992piercing} demonstrated that $(\mathbb R^d, \mathcal C(\mathbb R^d))$ admits a $(p,q)$ theorem for $p\geq q \geq d+1$. B{\'a}r{\'a}ny and Matou{\v{s}}ek \cite{barany2003fractional} proved that $(\mathbb Z^d, \mathcal C(\mathbb Z^d))$ admits a $(p,q)$ theorem for $p\geq q \geq d+1$. Edwards and Sober{\'o}n \cite{edwards2024extensions} extended this result to $(P, \mathcal C_{\Box}(P))$, proving that it also admits a $(p,q)$ theorem for $p\geq q \geq d+1$. 

Now we present two different kinds of colorful $(p,q)$ theorems by combining the colorful Helly theorem and the $(p,q)$ theorem. For a positive integer $n$, we define $[n] \coloneqq \{1,\dots, n \}$.

\begin{definition}[The first kind of colorful $(p,q)$ theorem]\label{de6}
    Let $(X,\mathcal C)$ be a convexity space and $p,q$ be positive integers such that $p\geq q$. Finite families $\mathcal F_1, \dots,\mathcal F_q \subseteq \mathcal C$ have the first kind of colorful $(p,q)$ property if whenever we choose, for each $i\in [q]$, distinct sets $C_{i, 1},\dots, C_{i, p} \in \mathcal F_i$, there are subscripts $j_1,\dots, j_{q} \in [p]$ such that $\bigcap_{i=1}^{q} C_{i, j_i}  \neq \emptyset$. 
    
    We say $(X,\mathcal C)$ admits the first kind of colorful $(p,q)$ theorem if there exists an integer $N_c=N_c(p,q)$ such that for every finite families $\mathcal F_1,\dots,\mathcal F_q \subseteq \mathcal C$ with the first kind of colorful $(p,q)$ property, there is a set $S \subseteq X$ of size at most $N_c$ and an index $i \in [q]$ such that any member $C_i \in \mathcal F_i$ intersects $S$.
\end{definition}

B{\'a}r{\'a}ny and Matou{\v{s}}ek \cite{barany2003fractional} proved that $(\mathbb R^d, \mathcal C(\mathbb R^d))$ admits the first kind of colorful $(p,q)$ theorem with $p\geq q\geq d+1$. Moreover, there is a second kind of colorful $(p,q)$ theorem, which is formulated as follows.

\begin{definition}[The second kind of colorful $(p,q)$ theorem]\label{de7}
    Let $(X,\mathcal C)$ be a convexity space and $p,q$ be positive integers such that $p\geq q$. Finite families $\mathcal F_1, \dots,\mathcal F_p \subseteq \mathcal C$ have the second kind of colorful $(p,q)$ property if whenever we choose, for each $i\in [p]$, $C_i \in \mathcal F_i$, there are $q$ of them contain a common point. 
    
    We say $(X,\mathcal C)$ admits the second kind of colorful $(p,q)$ theorem if there exists an integer $M_c=M_c(p,q)$ such that for every finite families $\mathcal F_1,\dots,\mathcal F_p \subseteq \mathcal C$ with the second kind of colorful $(p,q)$ property, there is a set $S \subseteq X$ of size at most $M_c$ and an index $i \in [p]$ such that any member $C_i \in \mathcal F_i$ intersects $S$.
\end{definition}

B{\'a}r{\'a}ny et al. \cite{barany2014colourful} proved that $(\mathbb R^d, \mathcal C(\mathbb R^d))$ admits the second kind of colorful $(p,q)$ theorem with $p\geq q\geq d+1$. 

Based on above definitions, all Helly-type theorems for separated $d$-intervals that we shall prove can be stated as follows.

\begin{theorem}\label{theorem: Helly-type theorems}
    Let $P \subseteq \mathbb R \times [d]$ and $(P,\mathcal C_{\equiv} (P))$ be a $\equiv$-convexity space.
    \begin{enumerate}
        \item\label{item1} The Radon number of $(P,\mathcal C_{\equiv} (P))$ satisfies $r(P,\mathcal C_{\equiv} (P)) \leq 2d+1$. Moreover, if $P$ contains at least $2$ points in every level, then $r(P,\mathcal C_{\equiv} (P)) = 2d+1$.
        \item\label{item2} The Helly number of $(P,\mathcal C_{\equiv} (P))$ satisfies $h(P,\mathcal C_{\equiv} (P)) \leq  2d$. Moreover, if $P$ contains at least $2$ points in every level, then $h(P,\mathcal C_{\equiv} (P)) = 2d$.
        \item\label{item3} The colorful Helly number of $(P,\mathcal C_{\equiv} (P))$ satisfies $h_c(P,\mathcal C_{\equiv} (P)) \leq 2d$. Moreover, if $P$ contains at least $2$ points in every level, then $h_c(P,\mathcal C_{\equiv} (P)) = 2d$.
        \item\label{item4} The fractional Helly number of $(P,\mathcal C_{\equiv} (P))$ satisfies $ h_f(P,\mathcal C_{\equiv} (P)) = 2$.
        \item Convexity space $(P,\mathcal C_{\equiv} (P))$ admits a $(p,q)$ theorem with $p\geq q \geq 2$.
        \item Convexity space $(P,\mathcal C_{\equiv} (P))$ admits the first kind of colorful $(p,q)$ theorem with $p\geq q \geq 2$.
        \item Convexity space $(P,\mathcal C_{\equiv} (P))$ admits the second kind of colorful $(p,q)$ theorem with $p\geq q \geq 2$.
    \end{enumerate}
\end{theorem}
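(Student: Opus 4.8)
The plan is to organize the seven conclusions around one structural input — the $(2d-1)$-collapsibility of the nerve, Theorem~\ref{theorem: collapsibility} — together with one ``improvement'' step coming from the level decomposition of a separated $d$-interval, and to pin down the exact constants in the first three items by hand. First I would invoke the standard consequences of $d$-collapsibility (with $d$ replaced by $2d-1$): a $(2d-1)$-collapsible complex has Helly number and colorful Helly number at most $2d$ and fractional Helly number at most $2d$. Applied to the nerve of an arbitrary finite $\mathcal C\subseteq\mathcal C_{\equiv}(P)$, this gives $h(P,\mathcal C_{\equiv}(P))\le 2d$, $h_c(P,\mathcal C_{\equiv}(P))\le 2d$ and $h_f(P,\mathcal C_{\equiv}(P))\le 2d$; the last will be sharpened to $2$ below.

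Next I would settle the exact values. For the Radon number, $r(P,\mathcal C_{\equiv}(P))\le 2d+1$ is elementary: among any $2d+1$ points of $P$, pigeonhole places at least three on a common level $\ell$, say $(u_1,\ell),(u_2,\ell),(u_3,\ell)$ with $u_1<u_2<u_3$; putting $(u_1,\ell),(u_3,\ell)$ into $A$, $(u_2,\ell)$ into $B$ and everything else arbitrarily, one has $(u_2,\ell)\in\conv A\cap\conv B$ because $[u_1,u_3]\cap P^{(\ell)}\ni u_2$, a Radon partition. For the matching lower bounds, assume each level contains two points $p_i<q_i$. The $2d$-point set $\{(p_i,i),(q_i,i):i\in[d]\}$ has no Radon partition — on every level its two points either lie in the same part (the other hull is empty there) or in different parts (the two hulls are distinct singletons) — so $r\ge 2d+1$. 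Since $h\le h_c$ for any convexity space, to obtain $h=h_c=2d$ it then suffices, given the upper bounds, to exhibit $2d$ members of $\mathcal C_{\equiv}(P)$ whose total intersection is empty while every $2d-1$ of them intersect: take $L_i$ (resp.\ $R_i$) to be the $d$-interval whose $i$-th level is $\{p_i\}$ (resp.\ $\{q_i\}$) and whose $j$-th level for $j\ne i$ is $[p_j,q_j]$. A common point of all of them would lie on a single level $i$ and equal both $p_i$ and $q_i$; but $(p_i,i)$ lies in every one of these sets except $R_i$, and $(q_i,i)$ in every one except $L_i$.

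The crux is the improvement $h_f(P,\mathcal C_{\equiv}(P))=2$. Here $h_f\ge 2$ holds whenever the space contains arbitrarily large families of pairwise disjoint nonempty $d$-intervals (e.g.\ when some level $P^{(i)}$ is infinite), since such a family has no intersecting subfamily of size $>1$. For $h_f\le 2$ I would prove a level-decomposition lemma: given a finite family of $d$-intervals with at least $\alpha\binom n2$ intersecting pairs, assign to each such pair a level on which its two members meet; some level $\ell^{*}$ receives at least $\tfrac\alpha d\binom n2$ pairs, all among the $n'$ members with nonempty $\ell^{*}$-th level, and the bound $\binom{n'}2\ge\tfrac\alpha d\binom n2$ forces $n'\ge\tfrac12\sqrt{\alpha/d}\,n$ for $n\ge2$. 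Restricting these members to level $\ell^{*}$ yields a finite subfamily of the one-dimensional $\equiv$-convexity space on $P^{(\ell^{*})}$ with at least $\tfrac\alpha d\binom{n'}2$ intersecting pairs; that space has fractional Helly number $2$ (the $d=1$ case of Theorem~\ref{theorem: collapsibility} makes its nerves $1$-collapsible), so a point $x\in P^{(\ell^{*})}$ lies in a fraction $\beta_1(\alpha/d)>0$ of them, and $(x,\ell^{*})$ is then a common point of that many of the original $d$-intervals — a positive fraction of $n$. Running the same scheme with two color classes, and using Theorem~\ref{theorem: optimal colorful fractional Helly for simplicial complexes} with $d=1$ in place of the monochromatic one-dimensional input, delivers the colorful fractional Helly theorem for colorful $2$-tuples for $\mathcal C_{\equiv}(P)$.

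Finally, with $h_f=2$ established, Theorem~\ref{fh2pq} yields the $(p,q)$ theorem for all $p\ge q\ge 2$ (the fifth item), and feeding the colorful fractional Helly theorem for colorful $2$-tuples into the usual derivations of the colorful $(p,q)$ theorems from colorful fractional Helly — the B\'ar\'any--Matou\v{s}ek argument for the first kind, the B\'ar\'any et al.\ argument for the second — yields the sixth and seventh items for all $p\ge q\ge 2$. I expect the main obstacle to be the colorful half of the improvement: pushing the level decomposition through the two-colored fractional Helly statement while keeping the quantitative loss under control, handling $d$-intervals whose component on the chosen level is empty, and verifying that the passage from colorful fractional Helly for $2$-tuples to the two colorful $(p,q)$ theorems goes through in an abstract convexity space exactly as it does over $\mathbb R^d$.
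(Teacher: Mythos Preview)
Your proposal is correct and follows essentially the same route as the paper: collapsibility (Theorem~\ref{theorem: collapsibility}) for the $2d$ upper bounds, the level-decomposition plus the $d=1$ case for the improvement to $h_f=2$ and for the colorful fractional Helly theorem for $2$-tuples, Theorem~\ref{fh2pq} for the $(p,q)$ theorem, and the B\'ar\'any--Matou\v{s}ek and B\'ar\'any et al.\ derivations for the two colorful $(p,q)$ theorems; the explicit lower-bound constructions for items~\ref{item1}--\ref{item3} also coincide with the paper's. The only cosmetic difference is that the paper isolates the ``partial colorful Helly number'' (Definition~\ref{definition: partial colorful Helly number}, Lemma~\ref{lemma: partial colorful Helly number}) as a separate ingredient feeding into the first kind of colorful $(p,q)$ theorem via Lemma~\ref{1cpq}, whereas you obtain it implicitly from the colorful fractional Helly theorem for $2$-tuples --- which is exactly how the paper deduces it too (Section~8).
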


\begin{theorem}[Colorful fractional Helly theorem for $\equiv$-convexity space]\label{cfh}
    Let $P \subseteq \mathbb R \times [d]$ and $(P,\mathcal C_{\equiv} (P))$ be a $\equiv$-convexity space. Let $\alpha \in (0,1]$ and $\beta = 1- (1-\alpha)^{1/2d}$. Let $\mathcal C_1, \dots, \mathcal C_{2d}$ be finite subfamilies of $\mathcal C_{\equiv} (P)$. If there are at least $\alpha |\mathcal C_1|\dots |\mathcal C_{2d}|$ colorful $2d$-tuples that intersect, then some $\mathcal C_i$ contains a subfamily of size $\beta |\mathcal C_i|$, whose elements intersect.
\end{theorem}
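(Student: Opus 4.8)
The plan is to deduce Theorem~\ref{cfh} from the optimal colorful fractional Helly theorem for $d$-collapsible complexes (Theorem~\ref{theorem: optimal colorful fractional Helly for simplicial complexes}) by feeding it the collapsibility bound of Theorem~\ref{theorem: collapsibility}. The numerical coincidence that makes this possible is that $(2d-1)$-collapsibility pairs with exactly $(2d-1)+1 = 2d$ color classes, which is precisely the number of families $\mathcal{C}_1,\dots,\mathcal{C}_{2d}$ in the hypothesis, and that the resulting fraction $1-(1-\alpha)^{1/(2d)}$ is exactly the prescribed $\beta$.

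Concretely, I would first set $n_i = |\mathcal{C}_i|$, write $\mathcal{C}_i = \{C^i_v : v \in N_i\}$ over pairwise disjoint label sets $N_i$ with $|N_i| = n_i$, and let $K$ be the nerve of the colored family $(C^i_v)_{v \in N_i,\, i \in [2d]}$ on the vertex set $N = N_1 \sqcup \dots \sqcup N_{2d}$. By Theorem~\ref{theorem: collapsibility} applied to this family, $K$ is $(2d-1)$-collapsible. One then observes that an intersecting colorful $2d$-tuple $(C^1_{v_1},\dots,C^{2d}_{v_{2d}})$ is literally the same object as a colorful $(2d-1)$-face $\{v_1,\dots,v_{2d}\}$ of $K$ (a face with one vertex in each class $N_i$), and that distinct tuples give distinct faces; hence the hypothesis supplies at least $\alpha n_1 \cdots n_{2d}$ colorful $(2d-1)$-faces of $K$.

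Now I would apply Theorem~\ref{theorem: optimal colorful fractional Helly for simplicial complexes} with its parameter ``$d$'' set equal to $2d-1$, so that its ``$d+1$'' is $2d$ and its colorful ``$d$-faces'' are our colorful $(2d-1)$-faces. This yields an index $i \in [2d]$ for which
\[ \dim K[N_i] \ \geq\ \bigl(1-(1-\alpha)^{1/(2d)}\bigr) n_i - 1 \ =\ \beta n_i - 1 . \]
Since $K[N_i]$ is exactly the nerve of $\mathcal{C}_i$, a face of $K[N_i]$ realizing this dimension is a subfamily $\mathcal{C}_i' \subseteq \mathcal{C}_i$ with $|\mathcal{C}_i'| \geq \lceil \beta n_i\rceil \geq \beta n_i$ and $\bigcap_{C\in\mathcal{C}_i'}C \neq \emptyset$, which is the conclusion of the theorem.

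As both ingredients are already in hand, the argument is in essence a dictionary translation, so the closest thing to a real obstacle is a bookkeeping point: Theorem~\ref{theorem: collapsibility} is being invoked for an \emph{indexed} family that may repeat a single $d$-interval (unavoidable, since the color classes $\mathcal{C}_i$ may overlap, or a fixed $\mathcal{C}_i$ may itself be a multiset). This is handled either by the standard fact that cloning a vertex of a $d$-collapsible complex keeps it $d$-collapsible, or by checking that the collapsing order constructed in the proof of Theorem~\ref{theorem: collapsibility} tolerates ties between equal $d$-intervals. The only other thing worth a sentence is the harmless rounding in passing from $\dim K[N_i] \ge \beta n_i - 1$ to a subfamily of size at least $\beta n_i$, together with the equally harmless observation that a member equal to $\emptyset$ lies in no intersecting tuple and can simply be ignored (it is not a vertex of $K$), so the extracted subfamily automatically has nonempty intersection.
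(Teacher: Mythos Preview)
Your proposal is correct and is exactly the route the paper takes: in the Preliminaries it states that Theorem~\ref{theorem: collapsibility} together with Theorem~\ref{theorem: optimal colorful fractional Helly for simplicial complexes} (applied with its ``$d$'' equal to $2d-1$) yields Theorem~\ref{cfh}. Your additional remarks on multisets, rounding, and empty members are careful bookkeeping that the paper leaves implicit.
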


Moreover, we prove generalizations with respect to the $k$-intersecting. For an arbitrary number of sets in $\mathcal C_{\equiv}(P)$, we say they \textit{$k$-intersect}, where $k \in [d]$, if their intersection contains at least $k$ points of $P$ that are from $k$ distinct levels. We then generalize the colorful Helly theorem and the fractional Helly theorem of $(P,\mathcal C_{\equiv} (P))$ with respect to $k$-intersecting. We replace the usual intersection with the $k$-intersection in Definition~\ref{de2} and Definition~\ref{de3}, then we have the following results.

\begin{theorem}\label{theorem: colorful helly for k-intersecting}
    Let $P \subseteq \mathbb R \times [d]$, $(P,\mathcal C_{\equiv} (P))$ be a $\equiv$-convexity space and $k \in [d]$ be a positive integer. The colorful Helly number of $(P,\mathcal C_{\equiv} (P))$ with respect to $k$-intersecting, denoted by $h_{ck}(P,\mathcal C_{\equiv} (P))$, satisfies $h_{ck}(P, \mathcal C_{\equiv} (P)) \leq 2d-k+1 $.
\end{theorem}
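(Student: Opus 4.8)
The plan is to deduce the theorem from a collapsibility statement. For a finite family $\mathcal{C} \subseteq \mathcal{C}_{\equiv}(P)$, define the \emph{$k$-intersection complex} $K_k(\mathcal{C}) \coloneqq \{\mathcal{G} \subseteq \mathcal{C} : \text{the members of } \mathcal{G} \text{ are } k\text{-intersecting}\}$. Deleting members of a $k$-intersecting family only removes constraints, so $K_k(\mathcal{C})$ is an abstract simplicial complex; when $k = 1$ it is exactly the nerve $K(\mathcal{C})$. The heart of the argument is the following strengthening of Theorem~\ref{theorem: collapsibility}, which I will call the \emph{Claim}: for every finite $\mathcal{C} \subseteq \mathcal{C}_{\equiv}(P)$, the complex $K_k(\mathcal{C})$ is $(2d-k)$-collapsible. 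Note that the case $k = 1$ is precisely Theorem~\ref{theorem: collapsibility}.

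Granting the Claim, the theorem follows quickly. Let $\mathcal{C}_1, \dots, \mathcal{C}_{2d-k+1}$ be finite subfamilies of $\mathcal{C}_{\equiv}(P)$ satisfying the colorful Helly property with respect to $k$-intersecting, and put $\mathcal{C} \coloneqq \mathcal{C}_1 \sqcup \dots \sqcup \mathcal{C}_{2d-k+1}$ (taking formal copies so that $\mathcal{C}_1, \dots, \mathcal{C}_{2d-k+1}$ partition the vertex set of $K \coloneqq K_k(\mathcal{C})$ into $(2d-k)+1$ classes). By the Claim $K$ is $(2d-k)$-collapsible, and by hypothesis every colorful $(2d-k+1)$-tuple is a $k$-intersecting family, so $K$ contains all $|\mathcal{C}_1| \cdots |\mathcal{C}_{2d-k+1}|$ colorful faces of dimension $2d-k$. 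Theorem~\ref{theorem: optimal colorful fractional Helly for simplicial complexes}, applied to $K$ with collapsibility parameter $2d-k$ and $\alpha = 1$ (so $\beta = 1 - (1-1)^{1/(2d-k+1)} = 1$), yields an index $i$ with $\dim K[\mathcal{C}_i] \geq |\mathcal{C}_i| - 1$, that is, $\mathcal{C}_i$ is itself a face of $K_k(\mathcal{C})$, meaning the whole family $\mathcal{C}_i$ is $k$-intersecting. This is exactly the conclusion demanded by the $k$-intersecting version of Definition~\ref{de2}, so $h_{ck}(P, \mathcal{C}_{\equiv}(P)) \leq 2d - k + 1$.

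For the Claim I would refine the collapsing procedure behind Theorem~\ref{theorem: collapsibility}. The bound $2d-k$ is predicted by a parameter count: an inclusion-maximal $k$-intersecting subfamily $\mathcal{G}$ is pinned down by choosing $k$ ``witness levels'' on which $\mathcal{G}$ has a common point (one coordinate of $P$ per such level), together with the two endpoints of each component of $\bigcap_{C \in \mathcal{G}} C$ on the remaining $d-k$ levels, for a total of $k + 2(d-k) = 2d-k$ parameters; the free face removed at each elementary collapse should be a subface controlled by these parameters, hence of dimension at most $2d-k-1$. Two concrete routes present themselves. The first is to run the same sweep of the real line used to prove Theorem~\ref{theorem: collapsibility}, but to stop tracking a level once it has been certified good for the current cluster of sets, so that at most $k$ levels remain active as witnesses and the rest of the sweep records only $2(d-k)$ endpoint events. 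The second, perhaps cleaner, route is the telescoping $K_k(\mathcal{C}) \supseteq K_{k+1}(\mathcal{C}) \supseteq \dots \supseteq K_d(\mathcal{C})$: one shows that for $k \leq j \leq d-1$ the complex $K_j(\mathcal{C})$ collapses onto $K_{j+1}(\mathcal{C})$ using only elementary collapses of dimension at most $2d-j-1 \leq 2d-k-1$ (peeling off, in a suitable order, the faces with exactly $j$ good levels), while the terminal complex $K_d(\mathcal{C})$ -- the nerve of the axis-parallel boxes $\prod_{\ell \in [d]}\bigl(C^{(\ell)} \cap P^{(\ell)}\bigr)$, $C \in \mathcal{C}$ -- is $d$-collapsible: by Wegner's theorem \cite{wegner1975d} when $P = \mathbb{R} \times [d]$, and in general because $K_d(\mathcal{C}) = \bigcap_{\ell \in [d]} K^{(\ell)}$, where $K^{(\ell)}$ is the $1$-collapsible nerve of the interval family $\{C^{(\ell)} \cap P^{(\ell)} : C \in \mathcal{C}\}$. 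Concatenating the collapses along this telescope produces a sequence of elementary $(2d-k)$-collapses reducing $K_k(\mathcal{C})$ to the empty complex.

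The crux -- and the only genuinely difficult step -- is the Claim: turning the parameter-count heuristic into a rigorous, globally consistent ordering of elementary collapses that respects the dimension bound $2d-k-1$. In the telescoping formulation the delicate points are to verify that $K_j(\mathcal{C})$ truly collapses onto $K_{j+1}(\mathcal{C})$ (that a free face of dimension at most $2d-j-1$ can always be found among the faces with exactly $j$ good levels, and that removing it never deletes a face with $j+1$ good levels) and to settle the $d$-collapsibility of the box nerve $K_d(\mathcal{C})$ for arbitrary $P$ via its product structure; in the sweep formulation, the delicate point is the bookkeeping showing that freezing a certified level uniformly lowers the achievable collapse dimension by one. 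The remaining steps -- the reduction to collapsibility and the single invocation of Theorem~\ref{theorem: optimal colorful fractional Helly for simplicial complexes} -- are routine.
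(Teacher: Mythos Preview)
Your route differs substantially from the paper's. The paper never proves---or even states---that $K_k(\mathcal C)$ is $(2d-k)$-collapsible. It argues directly, in the style of the classical colorful Helly proof: define $f(\hat C)=(\sup \hat C^{(1)},\dots,\sup \hat C^{(d)})\in\overline{\mathbb R}^d$ (with $-\infty$ on empty levels), pick among all colorful $(2d-k)$-tuples one whose $f$-value is lexicographically minimal, and show that every set in the remaining color class must contain the $k$ witness points $(a_{i_j},i_j)$ given by the first $k$ finite coordinates of that minimum. The single technical input is Lemma~\ref{lem2}---any $k$-intersecting family has a subfamily of size at most $2d-k$ with the same $f$-value---which is exactly your ``parameter count'' made rigorous, but invoked once rather than iterated into a collapse. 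If your Claim were established it would yield strictly more (it would subsume both Theorem~\ref{theorem: colorful helly for k-intersecting} and Theorem~\ref{theorem: fractional helly for k-intersecting} and strengthen Theorem~\ref{theorem: collapsibility} from $k=1$ to all $k$); the paper's route is narrower but self-contained.

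The Claim is the gap, and your proposal does not close it. The reduction to the Claim via Theorem~\ref{theorem: optimal colorful fractional Helly for simplicial complexes} at $\alpha=1$ is correct, but both sketches for the Claim are incomplete in ways you rightly flag as delicate yet leave unresolved. In the telescoping route, the assertion that $K_j(\mathcal C)$ collapses onto $K_{j+1}(\mathcal C)$ by elementary collapses of dimension at most $2d-j-1$ is a genuine obligation: a face with exactly $j$ good levels need not be free in $K_j$ relative to $K_{j+1}$, you give no global ordering of removals that provably never deletes a face of $K_{j+1}$, and you do not verify that $K_d(\mathcal C)$---while indeed an intersection of $d$ one-collapsible complexes---is itself $d$-collapsible for arbitrary $P$. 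In the sweep route, the inductive step of Lemma~\ref{lem1} requires exhibiting $\coll(K,\sigma)$ again as a $k$-intersection complex of a modified family; the paper's truncation trick for $k=1$ (deleting from one $C_j$ a half-line on level $i$ together with all higher levels) wipes out witnesses on levels $i+1,\dots,d$ and hence over-deletes for $k\geq 2$, and you offer no replacement construction. The paper sidesteps all of this by applying Lemma~\ref{lem2} a single time, to one lexicographically extremal colorful tuple, instead of trying to iterate it into a full collapse sequence.
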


\begin{theorem}\label{theorem: fractional helly for k-intersecting}
    Let $P \subseteq \mathbb R \times [d]$, $(P,\mathcal C_{\equiv} (P))$ be a $\equiv$-convexity space and $k \in [d]$ be a positive integer. The fractional Helly number of $(P,\mathcal C_{\equiv} (P))$ with respect to $k$-intersecting, denoted by $h_{fk}(P,\mathcal C_{\equiv} (P))$, satisfies $h_{fk}(P,\mathcal C_{\equiv} (P)) \leq 2d-k+1$.
\end{theorem}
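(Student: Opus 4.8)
The plan is to reduce the $k$-intersecting fractional Helly statement to an ordinary fractional Helly statement for an auxiliary family, by splitting the level structure. Given a finite family $\mathcal C \subseteq \mathcal C_{\equiv}(P)$ and a threshold $\alpha$, suppose at least $\alpha\binom{|\mathcal C|}{2d-k+1}$ of the $(2d-k+1)$-tuples $k$-intersect, i.e.\ the common intersection contains points in at least $k$ distinct levels. The first step is to recall (from the consequences of Theorem~\ref{theorem: collapsibility}, as used to prove item~\ref{item4} of Theorem~\ref{theorem: Helly-type theorems}) that the ordinary fractional Helly number of $(P,\mathcal C_{\equiv}(P))$ is $2$; more precisely, the nerve of any finite subfamily is $(2d-1)$-collapsible, hence has fractional Helly number at most $2d$, and this is improved to $2$ using the product/level structure. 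I would isolate from that argument the key structural fact: a $d$-interval restricted to a single level $i$ behaves like an ordinary convex set (interval) in $\mathbb R$, so level-wise the nerve is that of intervals on a line, which has Helly number $2$.

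The main step is a double-counting / pigeonhole argument on levels. For each $k$-subset $L \in \binom{[d]}{k}$ of levels, let $\mathcal C^{(L)}$ be the family whose $i$-th member ($i \in \mathcal C$) is the set of points of that $d$-interval lying in levels from $L$; a $(2d-k+1)$-tuple $k$-intersects iff for \emph{some} $L$ the corresponding tuple in $\mathcal C^{(L)}$ has a common point in \emph{every} level of $L$. Since there are only $\binom{d}{k}$ choices of $L$, if a positive fraction $\alpha$ of the $(2d-k+1)$-tuples $k$-intersect, then for some fixed $L_0$ a fraction $\alpha/\binom{d}{k}$ of the tuples intersect ``fully over $L_0$''. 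Restricting attention to $L_0$, the family $\mathcal C^{(L_0)}$ consists of separated $k$-intervals (after relabeling levels), and ``$k$-intersecting over $L_0$'' is exactly ordinary intersection in the $\equiv$-convexity space on $k$ levels. So I need a fractional Helly statement for the ordinary intersection of separated $k$-intervals with fractional Helly number $2d-k+1$ in terms of $(2d-k+1)$-tuples. This I would get from the $(2d-k+1)$-collapsibility side: by Theorem~\ref{theorem: collapsibility} applied with $d$ replaced by $k$, the nerve of $\mathcal C^{(L_0)}$ is $(2k-1)$-collapsible, but we actually only need that its fractional Helly number is at most $2d-k+1 \ge 2k-1+1 = 2k$ whenever $d \ge k$ — wait, $2d-k+1 \ge 2k$ iff $d \ge (3k-1)/2$, which may fail, so instead I would directly invoke the standard fact that a $c$-collapsible complex has fractional Helly number at most $c+1$, giving fractional Helly number $\le 2k \le 2d-k+1$ precisely when $d \ge k$ fails to suffice — therefore the cleaner route is: apply the $2$-fractional-Helly bound for each level separately and combine.

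I expect the main obstacle to be getting the \emph{number of levels to bookkeep correctly}: one must ensure that after the pigeonhole selection of $L_0$, a $k$-intersecting subfamily of size $\beta(\alpha)|\mathcal C|$ of the original family is produced, not merely a subfamily that intersects in $k-1$ or fewer levels. The safest implementation is an inductive/iterated argument: use the ordinary fractional Helly number $2$ on the ``first'' level of $L_0$ to pass to a large subfamily $\mathcal C'$ all of whose members share a point in level $L_0^{(1)}$, discard that level, and recurse on the remaining $k-1$ levels with threshold adjusted by the fractional Helly function; since the ambient family restricted to one level is just intervals on $\mathbb R$ (fractional Helly number $2$), each stage costs one application and the tuple-size budget $2d-k+1$ is exactly enough to run the $2d-k+1$-tuple hypothesis down through $k$ stages. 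The bound $h_{fk} \le 2d-k+1$ then follows by checking that $k$ nested applications of a ``fractional Helly number $2$'' step, starting from $(2d-k+1)$-tuple density, terminate with a constant fraction; the arithmetic $2 + (k-1) = k+1 \le 2d-k+1$ (i.e.\ $d \ge k$) guarantees the tuple size never runs out. Filling in the quantitative form of $\beta$ via Theorem~\ref{theorem: optimal colorful fractional Helly for simplicial complexes} applied level-by-level is then routine.
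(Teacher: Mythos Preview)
Your pigeonhole reduction over $k$-element subsets $L\subseteq [d]$ of levels is a legitimate opening move, but the iterative step you propose afterwards has a real gap. After applying the fractional Helly theorem on one level of $L_0$ to pass to a large subfamily $\mathcal C'$ whose members share a point in that level, you have no control over how many of the original $k$-intersecting $(2d-k+1)$-tuples lie inside $\mathcal C'$: the fractional Helly conclusion only produces a large intersecting subfamily, not a subfamily that still contains a positive fraction of the witnessing tuples. Hence your ``recurse on the remaining $k-1$ levels with threshold adjusted'' step has no hypothesis to feed on, and the phrase ``the tuple-size budget $2d-k+1$ is exactly enough'' is not attached to any concrete mechanism. (Relatedly, your earlier claim that ``$k$-intersecting over $L_0$'' is ordinary intersection in a $k$-level $\equiv$-space is false---it is the strictly stronger condition of intersecting on \emph{every} level of $L_0$---and you seem to notice this when the $2k$ bound fails to suffice.)

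The paper takes a direct route that avoids any level-subset pigeonhole or iteration. It proves a witness lemma (Lemma~\ref{lem2}): any $k$-intersecting finite family contains a subfamily of size at most $2d-k$ with the same vector of level-wise suprema. A single pigeonhole over these $(2d-k)$-element witnesses then exhibits at least $\frac{\alpha}{2d-k+1}n$ members containing a common set of $k$ points in distinct levels. If you want to salvage your own reduction instead, the clean finish after fixing $L_0$ is to observe that ``intersecting on every level of $L_0$'' is exactly the nerve condition for a family of axis-parallel boxes in $\mathbb R^{|L_0|}$ restricted to the projection of $P$, whose fractional Helly number is $k+1\le 2d-k+1$ by the Edwards--Sober\'on result cited in Section~\ref{section: Helly-type notions}; that single invocation replaces your iteration.
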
 

\section{Preliminaries}
By Theorem~\ref{theorem: optimal colorful fractional Helly for simplicial complexes}, Theorem~\ref{theorem: collapsibility} implies Theorem~\ref{cfh}. Subsequently, Theorem~\ref{cfh} provides the upper bounds of the Helly number, the colorful Helly number and the fractional Helly number. By Theorem~\ref{fh2pq}, the fourth item of Theorem~\ref{theorem: Helly-type theorems} implies the fifth item of Theorem~\ref{theorem: Helly-type theorems}. For the sixth and seventh items, we need to introduce the following variation of the colorful Helly theorem, which was first proven for $(\mathbb Z^d, \mathcal C(\mathbb Z^d))$ by B{\'a}r{\'a}ny and Matou{\v{s}}ek in \cite{barany2003fractional}.

\begin{definition}[Partial colorful Helly number]\label{definition: partial colorful Helly number}
    Let $(X, \mathcal C)$ be a convexity space. We say $(X,\mathcal C)$ admits a partial colorful Helly theorem for colorful $k$-tuples, if there exist an integer $k$ and a number $N_{kpc}=N_{kpc} (m)$ such that for every finite families $\mathcal F_1,\dots, \mathcal F_k \subseteq \mathcal C$ of sizes $|\mathcal F_i| = N_{kpc}(m)$ with the colorful Helly property, there is a family $\mathcal F_i$ that contains an intersecting subfamily of size at least $m$.
    
    The partial colorful Helly number $h_{pc}(X, \mathcal C)$ is the minimal integer $n$ (if it exists) such that $(X,\mathcal C)$ admits the partial colorful Helly theorem for $n$-tuples.
\end{definition}

Then we shall prove the following four lemmas, which implies that the fourth, sixth, and seventh items of Theorem~\ref{theorem: Helly-type theorems} hold.

\begin{lemma}\label{lemma: partial colorful Helly number}
    Let $P \subseteq \mathbb R \times [d]$ and $(P,\mathcal C_{\equiv} (P))$ be a $\equiv$-convexity space. The partial colorful Helly number of $(P,\mathcal C_{\equiv} (P))$ is $2$.
\end{lemma}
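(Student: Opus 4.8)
The plan is to establish the two bounds $h_{pc}(P,\mathcal C_{\equiv}(P))\le 2$ and $h_{pc}(P,\mathcal C_{\equiv}(P))\ge 2$; the first is the substantive one. For the lower bound, note that the colorful Helly property of a single family $\mathcal F_1$ asserts only that every member of $\mathcal F_1$ is non-empty, so letting $\mathcal F_1$ consist of $N$ pairwise disjoint one-point members of $\mathcal C_{\equiv}(P)$ (which exist for every $N$ whenever $P$ is infinite, the relevant case) shows that no partial colorful Helly theorem for $1$-tuples can hold. For the upper bound I must exhibit, for each $m$, a number $N=N(d,m)$ such that whenever $\mathcal F_1,\mathcal F_2\subseteq\mathcal C_{\equiv}(P)$ satisfy $|\mathcal F_1|=|\mathcal F_2|=N$ and every $A\in\mathcal F_1$ meets every $B\in\mathcal F_2$, one of the two families contains $m$ members with a common point.

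I would begin from the elementary reformulation, valid in any set system: a family $\mathcal G$ contains an intersecting subfamily of size $m$ if and only if some point of the ground set lies in at least $m$ of its members, a number I will call the \emph{depth} of $\mathcal G$. Arguing by contrapositive, suppose $\mathrm{depth}(\mathcal F_1)\le m-1$ and $\mathrm{depth}(\mathcal F_2)\le m-1$; I must bound $N$. For each pair $(A,B)\in\mathcal F_1\times\mathcal F_2$ fix a witness point $p_{AB}\in A\cap B$; it lies in one of the $d$ levels, so by pigeonhole some level $i^{*}$ carries at least $N^{2}/d$ witnessed pairs. Writing $P_{i^{*}}=\{x:(x,i^{*})\in P\}$, and letting $K_A$ resp. $L_B$ be the $i^{*}$-th (convex) component of the underlying $d$-interval of $A$ resp. $B$, each of these $N^{2}/d$ pairs satisfies $K_A\cap L_B\cap P_{i^{*}}\neq\emptyset$, while the depth hypothesis says no point of $P_{i^{*}}$ lies in $m$ of the $K_A$'s or in $m$ of the $L_B$'s.

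The heart of the argument is to bound the number of pairs $(A,B)$ with $K_A\cap L_B\cap P_{i^{*}}\neq\emptyset$ by $O_m(N)$. I would first replace each $K_A$ having non-empty $P_{i^{*}}$-trace by the convex hull of $K_A\cap P_{i^{*}}$, and likewise each $L_B$; a short computation shows this preserves every trace (hence every $P_{i^{*}}$-incidence between a $K$ and an $L$) and that, after it, ``sharing a point of $P_{i^{*}}$'' coincides with ordinary interval intersection. Hence the intersection graph of $\{K_A\}$ is an interval graph, so it is perfect; it contains no clique of size $m$, since a $P_{i^{*}}$-clique of intervals $J_1,\dots,J_s$ has a common $P_{i^{*}}$-point (the pair consisting of the interval with the largest left endpoint and the one with the smallest right endpoint already meets in $J_1\cap\dots\cap J_s$, and their $P_{i^{*}}$-witness lies there). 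Therefore $\{K_A\}$ splits into at most $m-1$ classes of pairwise disjoint intervals, and likewise $\{L_B\}$. Between a fixed $K$-class and a fixed $L$-class the intervals on each side are pairwise disjoint, hence linearly ordered, and a routine count bounds the number of intersecting cross-pairs by (size of $K$-class) $+$ (size of $L$-class) $-1\le 2N-1$. Summing over the at most $(m-1)^{2}$ pairs of classes gives at most $(m-1)^{2}(2N-1)<2(m-1)^{2}N$ intersecting pairs at level $i^{*}$, so $N^{2}/d\le 2(m-1)^{2}N$, i.e. $N\le 2d(m-1)^{2}$. Thus $N(d,m)=2d(m-1)^{2}+1$ works and $h_{pc}(P,\mathcal C_{\equiv}(P))=2$.

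The main obstacle is that the members of $\mathcal C_{\equiv}(P)$ are intervals \emph{relative to} $P$: the level sets $C^{(i)}$ are intersections of honest intervals with the arbitrary point set $P_i$, so they need not be convex, and a point common to several of the underlying intervals $K_A$ need not belong to $P$. The two interval-combinatorial facts above — that bounded $P_{i^{*}}$-depth still forces a partition of the traces into $m-1$ ``packings'', and that two such packings have only $O(N)$ crossing pairs — are exactly what allow the one-dimensional reasoning to survive the restriction to $P$; passing to the convex hulls of the traces (which also removes all open/closed-endpoint degeneracies) and then invoking perfection of interval graphs is the cleanest route I see.
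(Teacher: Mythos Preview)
Your argument is correct and takes a genuinely different route from the paper's. The paper derives Lemma~\ref{lemma: partial colorful Helly number} as an immediate corollary of Lemma~\ref{lemma: colorful fractional Helly number} (the colorful fractional Helly theorem for colorful $2$-tuples): pigeonhole the $\alpha n_1 n_2$ intersecting colorful pairs onto one level and apply the $d=1$ case of Theorem~\ref{cfh}, which itself rests on $1$-collapsibility of nerves of interval families together with the black box Theorem~\ref{theorem: optimal colorful fractional Helly for simplicial complexes}; then set $N_{2pc}(m)=m/\beta(1)$. You share the pigeonhole step but replace all of the collapsibility machinery by an elementary interval-graph count: bounded depth forces clique number at most $m-1$ on the (convex hulls of the) traces, perfection of interval graphs yields a partition of each side into at most $m-1$ packings, and a direct charging argument bounds the cross-pairs between two packings by the sum of their sizes minus one. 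Your approach is fully self-contained and produces the explicit bound $N_{2pc}(m)\le 2d(m-1)^{2}+1$; the paper's approach is shorter given the ambient machinery and has the side benefit of establishing Lemma~\ref{lemma: colorful fractional Helly number} along the way, which is needed independently for Lemma~\ref{2cpq}. (Your remark that the lower bound $h_{pc}\ge 2$ requires $P$ to be infinite is well taken; the paper in fact only argues the upper bound.)
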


\begin{lemma}\label{1cpq}
    Let $(X,\mathcal C)$ be a convexity space and $p,q$ be positive integers such that $p\geq q$. If $ \max \{ h_{pc}(X,\mathcal C),  h_f(X,\mathcal C) \}=k$, then $(X,\mathcal C)$ admits the first kind of colorful $(p,q)$ theorem with $p\geq q \geq k$.
\end{lemma}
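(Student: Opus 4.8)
My plan is to mimic, in the colorful setting, the standard passage from a fractional-Helly-type property to a $(p,q)$ theorem (the monochromatic version being Theorem~\ref{fh2pq}), with the partial colorful Helly property playing the role of fractional Helly. Write $k=\max\{h_{pc}(X,\mathcal C),h_f(X,\mathcal C)\}$, so that $(X,\mathcal C)$ admits a partial colorful Helly theorem for $k$-tuples (with some threshold function $N_{kpc}(\cdot)$) and a fractional Helly theorem for $k$-tuples. Fix $p\ge q\ge k$ and families $\mathcal F_1,\dots,\mathcal F_q\subseteq\mathcal C$ with the first kind of colorful $(p,q)$ property. As a preliminary reduction, if some $|\mathcal F_i|$ is below a threshold $N_0=N_0(p,q)$ to be fixed later, then taking one point from each member of $\mathcal F_i$ is a transversal of $\mathcal F_i$ of size $<N_0$, so I may assume every $|\mathcal F_i|\ge N_0$.

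\emph{Extracting a colorful Helly subconfiguration, then applying partial colorful Helly.} I would consider the $q$-partite $q$-uniform hypergraph on vertex classes $\mathcal F_1,\dots,\mathcal F_q$ whose edges are the intersecting colorful $q$-tuples. The first kind of colorful $(p,q)$ property says exactly that its complement (colorful $q$-tuples with empty intersection) contains no complete $q$-partite subhypergraph with all parts of size $p$; hence, by the K\H{o}v\'ari--S\'os--Tur\'an bound and its hypergraph analogue (Erd\H{o}s), the complement spans only an $o(1)$ fraction of the colorful $q$-tuples, the edge hypergraph is dense, and a second application of the same extremal bound yields (once the $\mathcal F_i$ are large enough) subfamilies $\mathcal G_i\subseteq\mathcal F_i$ of any prescribed common size $m$ with the colorful Helly property. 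Since the colorful Helly property on $\mathcal G_1,\dots,\mathcal G_q$ implies the colorful Helly property for $k$-tuples on any $k$ of them (a common point of a colorful $q$-tuple extending a given colorful $k$-tuple lies in that $k$-tuple's intersection), choosing $m=N_{kpc}(m')$ and invoking the partial colorful Helly theorem produces an index $i_0\in[q]$ and an intersecting subfamily of $\mathcal F_{i_0}$ of size $\ge m'$. Running this together with the fractional Helly theorem for $k$-tuples, exactly as ``positive fraction of intersecting $k$-tuples'' is boosted to ``linearly large intersecting subfamily'' in the monochromatic derivation, should give an index $i_0$ and a subfamily of $\mathcal F_{i_0}$ of size $\ge\delta(p,q)\,|\mathcal F_{i_0}|$ with a common point.

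\emph{From a linearly large intersecting subfamily to a bounded transversal.} This step is the colorful counterpart of the Alon--Kleitman argument behind Theorem~\ref{fh2pq}. First, linear-programming duality ($\tau^*=\nu^*$) together with the previous step bounds the fractional transversal number of one of the families, $\tau^*(\mathcal F_{i_0})\le C_1(p,q)$: if every $\mathcal F_i$ carried a large fractional matching, then replacing each set by an appropriate number of copies and rerunning the previous step would force an intersecting subfamily exceeding the multiplicity bound, a contradiction. Second, the fractional Helly theorem for $k$-tuples yields weak $\varepsilon$-nets of size bounded only in terms of $\varepsilon$ and $k$ (this is the mechanism inside Theorem~\ref{fh2pq}); applying this to $\mathcal F_{i_0}$ with $\varepsilon\approx 1/C_1(p,q)$ and combining with the bounded fractional transversal produces an integer transversal $S$ of $\mathcal F_{i_0}$ with $|S|\le N_c(p,q)$, which is exactly the desired conclusion.

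I expect the main obstacle to be the last step: making the transversal size depend only on $p$ and $q$. The naive iteration ``extract a large intersecting subfamily, pierce it with a point, delete, repeat'' yields only an $O(\log n)$ transversal, because consecutive extractions may involve different families; getting a constant bound genuinely requires the LP-duality estimate on the fractional transversal number of a single family together with the weak-$\varepsilon$-net consequence of fractional Helly, and making these interact correctly with the colorful hypotheses --- notably forcing the intersecting subfamily of the previous step to be a positive fraction of its family --- is the delicate point. A minor technical point is carrying out the extremal-hypergraph estimates cleanly for general $q$ rather than just $q=2$.
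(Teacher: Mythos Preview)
Your plan is essentially the paper's proof (which in turn reproduces B\'ar\'any--Matou\v{s}ek, \cite{barany2003fractional}): the same ingredients---Erd\H{o}s--Simonovits supersaturation to locate a complete $k$-partite subhypergraph, the partial colorful Helly theorem inside it, fractional Helly to pass to a positive-fraction intersecting subfamily, and then LP duality plus Theorem~\ref{bou1} (which packages the weak-$\varepsilon$-net step) to bound $\tau$---all appear in the same roles.

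The one place where your outline is genuinely underspecified is precisely the step you flag: going from ``some $\mathcal G_i$ contains an intersecting $m'$-tuple'' to ``some $\mathcal F_i$ has $\alpha\binom{|\mathcal F_i|}{k}$ intersecting $k$-tuples''. As written you only extract a \emph{single} complete $k$-partite configuration and hence a single monochromatic intersecting $k$-tuple, which is not enough to feed into fractional Helly. The paper resolves this by reversing the order of two of your steps: it performs the blow-up \emph{first} (passing to the multifamilies $\mathcal F_i^\ast$ determined by an optimal fractional matching), and then proves a local claim valid for \emph{every} choice of $s$-subsets $\mathcal D_i\subseteq\mathcal F_i^\ast$, namely that some $\mathcal D_i$ contains an intersecting $k$-tuple (handling the case where some $\mathcal D_i$ already contains $k$ copies of the same set separately, and otherwise running your supersaturation $+$ partial-colorful-Helly argument inside the $\mathcal D_i$). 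A straightforward double count over all choices of the $\mathcal D_i$ then forces some $\mathcal F_i^\ast$ to have $\alpha\binom{n_i}{k}$ intersecting $k$-tuples, at which point fractional Helly and the inequality $1\ge\sum_{x\in C}w_i(C)\ge\beta\nu^\ast(\mathcal F_i)$ finish as you describe. So your plan is right, but the ``rerun the previous step on the blow-up'' needs to be strengthened to ``the previous step holds uniformly in every $s$-window of the blow-up'', and the counting must be done there.
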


\begin{lemma}\label{lemma: colorful fractional Helly number}
    Let $P \subseteq \mathbb R \times [d]$ and $(P,\mathcal C_{\equiv} (P))$ be a $\equiv$-convexity space. Convexity space $(P,\mathcal C_{\equiv} (P))$ admits the colorful fractional Helly theorem for colorful $2$-tuples.
\end{lemma}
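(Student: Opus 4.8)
The plan is to reduce this two–colour colorful fractional Helly statement for $d$-intervals to the one–level situation, i.e.\ to intervals on the real line, where the $d=1$ instance of Theorem~\ref{theorem: optimal colorful fractional Helly for simplicial complexes} can be applied once we know the relevant nerve is $1$-collapsible (this is the $d=1$ case of Theorem~\ref{theorem: collapsibility}, equivalently Wegner's classical fact \cite{wegner1975d} that the nerve of a finite family of convex subsets of $\mathbb R$ is $1$-collapsible).

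So let $\mathcal F_1,\mathcal F_2\subseteq\mathcal C_{\equiv}(P)$ be finite, say $|\mathcal F_j|=n_j$ (we may assume $n_1,n_2\ge 1$), with at least $\alpha n_1 n_2$ intersecting colorful pairs. First I would pigeonhole on levels: for every intersecting pair $(A,B)$ with $A\in\mathcal F_1$, $B\in\mathcal F_2$, fix a witness $(x,\ell)\in A\cap B\subseteq P$ and record its level $\ell\in[d]$; then some level $i^\ast$ is recorded for at least $\tfrac{\alpha}{d}n_1 n_2$ of the pairs. Next I would restrict everything to level $i^\ast$: let $\mathcal G_j$ be the family of the \emph{non-empty} sets $C\cap(\mathbb R\times\{i^\ast\})$ with $C\in\mathcal F_j$; these are exactly the sets $I\cap P^{(i^\ast)}$ with $I\subseteq\mathbb R$ convex, where $P^{(i^\ast)}=P\cap(\mathbb R\times\{i^\ast\})$, so $\mathcal G_j\subseteq\mathcal C_{\equiv}(P^{(i^\ast)})$ is a finite family in a $\equiv$-convexity space over $\mathbb R\times[1]$. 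Put $n_j'=|\mathcal G_j|\le n_j$ (and $n_j'\ge 1$). Every pair recorded for $i^\ast$ has both restrictions non-empty and intersecting (they share $(x,i^\ast)$), so $(\mathcal G_1,\mathcal G_2)$ has at least $\tfrac{\alpha}{d}n_1 n_2=\alpha' n_1' n_2'$ intersecting colorful pairs, where $\alpha':=\dfrac{\alpha n_1 n_2}{d\,n_1' n_2'}$ satisfies $\tfrac{\alpha}{d}\le\alpha'\le 1$ — the lower bound because $n_j'\le n_j$, the upper bound because there are at most $n_1' n_2'$ colorful pairs.

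Next I would apply the one–level tools. The nerve $K$ of $\mathcal G_1\cup\mathcal G_2$ is $1$-collapsible, and its colorful $1$-faces, for the colour classes $N_1\leftrightarrow\mathcal G_1$ and $N_2\leftrightarrow\mathcal G_2$, are precisely the intersecting colorful pairs, of which there are at least $\alpha' n_1' n_2'$. Theorem~\ref{theorem: optimal colorful fractional Helly for simplicial complexes} with $d=1$ then yields an index $j$ with $\dim K[N_j]\ge\bigl(1-(1-\alpha')^{1/2}\bigr)n_j'-1$, hence a face of $K$ supported on $\mathcal G_j$ of cardinality at least $\bigl(1-\sqrt{1-\alpha'}\bigr)n_j'$; by the definition of the nerve this face is a subfamily of $\mathcal G_j$ with a common point $(x_0,i^\ast)\in P$. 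Lifting back, the members of $\mathcal F_j$ whose level-$i^\ast$ restriction belongs to this face all contain $(x_0,i^\ast)$, so they form an intersecting subfamily of $\mathcal F_j$. Using $1-\sqrt{1-t}\ge t/2$ on $[0,1]$ and $n_{3-j}'\le n_{3-j}$,
\[
\bigl(1-\sqrt{1-\alpha'}\bigr)n_j'\ \ge\ \tfrac12\alpha' n_j'\ =\ \frac{\alpha\,n_1 n_2}{2d\,n_{3-j}'}\ \ge\ \frac{\alpha}{2d}\,n_j,
\]
so $(P,\mathcal C_{\equiv}(P))$ admits the colorful fractional Helly theorem for colorful $2$-tuples with $\beta(\alpha)=\alpha/(2d)$.

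The one genuinely delicate point is the bookkeeping forced by empty level-components: the one–dimensional theorem only sees the non-empty restrictions, so a priori it returns a subfamily whose size is a fraction of $n_j'$ rather than of $|\mathcal F_j|$; the choice of the parameter $\alpha'$ above is exactly what turns this into a bound proportional to $|\mathcal F_j|$. Everything else — the level pigeonhole, the restriction to a single level, and the lift back — is routine.
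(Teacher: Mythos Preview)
Your proof is correct and follows the same route as the paper: pigeonhole onto a single level and then apply the $d=1$ colorful fractional Helly theorem (the paper cites Theorem~\ref{cfh} with $d=1$, which unpacks to exactly the combination of Theorem~\ref{theorem: collapsibility} and Theorem~\ref{theorem: optimal colorful fractional Helly for simplicial complexes} that you invoke). The only difference is cosmetic: the paper silently keeps the empty level-$i^\ast$ restrictions in the one-dimensional families (so effectively $n_j'=n_j$) and reads off $\beta(\alpha)=1-\sqrt{1-\alpha/d}$, whereas your careful bookkeeping with $\alpha'$ handles them explicitly and produces the weaker but perfectly adequate $\beta(\alpha)=\alpha/(2d)$.
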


\begin{lemma}\label{2cpq}
    Let $(X,\mathcal C)$ be a convexity space and $k,p,q$ be positive integers such that $p\geq q$. If $(X,\mathcal C)$ admits a colorful fractional Helly theorem for colorful $k$-tuples, then $(X,\mathcal C)$ admits the second kind of colorful $(p,q)$ theorem with $p\geq q \geq k$.
\end{lemma}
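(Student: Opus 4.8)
The plan is to run the Alon--Kleitman $(p,q)$-argument in a colorful, LP-dual form, with the colorful fractional Helly theorem as the engine. Two preliminary observations. First, a convexity space that admits a colorful fractional Helly theorem for colorful $k$-tuples also admits an ordinary fractional Helly theorem for $k$-tuples: given a family $\mathcal F$ with a constant fraction of intersecting $k$-tuples, split $\mathcal F$ into $k$ parts of nearly equal size uniformly at random, so that in expectation a constant fraction of the intersecting $k$-tuples become rainbow; the colorful fractional Helly theorem then extracts a constant-fraction intersecting subfamily from one part, hence from $\mathcal F$. So by Theorem~\ref{fh2pq}, $(X,\mathcal C)$ admits the ordinary $(p,q)$ theorem for all $p\ge q\ge k$; write $N_{AK}(\cdot,\cdot)$ for its transversal bound. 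Second, since the members of $\mathcal F_i$ determine only finitely many distinct membership types, the fractional transversal and fractional matching of $\mathcal F_i$ are governed by a finite linear program, so their optimal values $\tau_i^\ast$ and $\nu_i^\ast$ coincide and are rational.

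Now fix $p\ge q\ge k$ and finite non-empty families $\mathcal F_1,\dots,\mathcal F_p\subseteq\mathcal C$ with the second kind of colorful $(p,q)$ property; the goal is a transversal of bounded size for one of them. For each $i$, rescale an optimal fractional matching of $\mathcal F_i$ to a probability measure $\mu_i$ on the members of $\mathcal F_i$; then for every point $y\in X$ one has $\sum\{\mu_i(C): C\in\mathcal F_i,\ y\in C\}\le 1/\nu_i^\ast$. Sample a colorful $p$-tuple $(C_1,\dots,C_p)$ from the product $\prod_i\mu_i$. Whatever the outcome, the second kind of colorful $(p,q)$ property furnishes $q\ge k$ of these sets sharing a point, hence $k$ of them, sitting in $k$ distinct colours; averaging over the $\binom{p}{k}$ colour $k$-subsets, we may fix a $k$-subset $A\subseteq[p]$ such that, with $\prod_{a\in A}\mu_a$-probability at least $\alpha_0:=\binom{p}{k}^{-1}$ (take $\tfrac12$ in the degenerate case $p=k$), the sampled $(C_a)_{a\in A}$ have a common point. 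Clearing denominators of the rational $\mu_a$, $a\in A$, by replacing each $\mathcal F_a$ with a suitable multiset of copies of its members, turns this into a bona fide colorful fractional Helly instance for colorful $k$-tuples with density $\alpha_0$; applying the colorful fractional Helly theorem gives an index $a_0\in A$ and a point $x$ with $\sum\{\mu_{a_0}(C):C\in\mathcal F_{a_0},\ x\in C\}\ge\gamma:=\beta(\alpha_0)>0$. The matching bound then forces $\tau_{a_0}^\ast=\nu_{a_0}^\ast\le 1/\gamma=:T_0$, a constant depending only on $p$ and $q$ (and on the space through $\beta$).

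Finally, round $\mathcal F_{a_0}$. If some $T_0(k-1)+1$ members of $\mathcal F_{a_0}$ had no $k$ with a common point, every point would lie in at most $k-1$ of them, so giving each such member weight $1/(k-1)$ would be a fractional matching of value $\bigl(T_0(k-1)+1\bigr)/(k-1)>T_0$, contradicting $\nu_{a_0}^\ast\le T_0$. Hence $\mathcal F_{a_0}$ has the $\bigl(T_0(k-1)+1,\,k\bigr)$ property, and the ordinary $(p,q)$ theorem from the preliminary step yields $\tau(\mathcal F_{a_0})\le N_{AK}\!\bigl(T_0(k-1)+1,\,k\bigr)=:M_c(p,q)$, as required.

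The crux is the middle paragraph: setting up the product measure, turning the ``$q$ of them intersect'' hypothesis into a colorful fractional Helly hypothesis on a well-chosen set of $k$ colour classes via averaging, and passing from this weighted statement to an unweighted colorful fractional Helly instance by clearing denominators. The preliminary reductions and the final rounding are routine once Theorem~\ref{fh2pq} is available.
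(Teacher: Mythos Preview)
Your proof is correct and follows essentially the same route as the paper's (which in turn follows B\'ar\'any et al.): averaging over the $\binom{p}{k}$ colour $k$-subsets to locate one with rainbow-intersection density at least $\binom{p}{k}^{-1}$, applying the colorful fractional Helly theorem on a blow-up to bound $\nu^\ast$ for some colour class, and then bootstrapping from $\nu^\ast$ to $\tau$ via ordinary fractional Helly. The only difference is packaging: the paper invokes Theorems~\ref{bou1} and~\ref{bou2} as black boxes for the last step, whereas you unpack them explicitly (first deducing ordinary fractional Helly from the colorful version---a point the paper leaves implicit in its appeal to Theorem~\ref{bou1}---and then rounding $\nu^\ast\le T_0$ to a $(T_0(k{-}1){+}1,k)$ property before applying Theorem~\ref{fh2pq}).
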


Moreover, we introduce the fractional transversal number, fractional matching number, and some related theorems, which will be used to prove Lemma~\ref{1cpq} and Lemma~\ref{2cpq}.

\begin{definition}
    Let $\mathcal F$ be a finite set family on the ground set $X$. The fractional transversal number $\tau^{\ast}(\mathcal F)$ is the minimum of $\sum_{x\in X} f(x)$ over all functions $f: X \to [0,1]$ such that $\sum_{x\in C} f(x) \geq 1$ for any $C \in \mathcal F$.
\end{definition}

\begin{definition}
    Let $\mathcal F$ be a finite set family on the ground set $X$. The fractional matching number $\nu^{\ast}(\mathcal F)$ is the maximum of $\sum_{C\in \mathcal F} f(C)$ over all functions $f: \mathcal F \to [0,1]$ such that $\sum_{C \in \mathcal F: x \in C} f(C) \leq 1$ for any $x \in X$.
\end{definition}

It is known that $\nu^{\ast} (\mathcal F) = \tau ^{\ast}(\mathcal F)$. Moreover, Alon et al. \cite{alon2002transversal} proved the following theorem.

\begin{theorem}[Alon, Kalai, Matou{\v{s}}ek, Meshulam \cite{alon2002transversal}]\label{bou1}
    Let $(X,\mathcal C)$ be a convexity space. If $(X,\mathcal C)$ admits the fractional Helly theorem, then for every finite family $\mathcal F \subseteq \mathcal C$, we have $\tau(\mathcal F) \leq f(\tau^{\ast}(\mathcal F))$ for some function $f$.
\end{theorem}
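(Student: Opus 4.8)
The plan is to follow the argument of Alon, Kalai, Matou{\v{s}}ek and Meshulam \cite{alon2002transversal}; since this result is used in the sequel only as a black box, I sketch the strategy rather than carry out the estimates. Fix a finite family $\mathcal F \subseteq \mathcal C$ and set $t := \tau^*(\mathcal F)$. The first step is to reduce to a finite ground set: for each subfamily $\mathcal G \subseteq \mathcal F$ with $\bigcap \mathcal G \neq \emptyset$ fix a witness point $p_{\mathcal G} \in \bigcap \mathcal G$, let $P_0$ be the finite set of all these witnesses, and map each point $x$ lying in some member of $\mathcal F$ to $p_{\mathcal G(x)}$, where $\mathcal G(x) := \{C \in \mathcal F : x \in C\}$. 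Since $p_{\mathcal G(x)}$ lies in every member of $\mathcal F$ containing $x$, this map sends any transversal to a transversal of no larger size; and, summing weights along fibres and then capping each weight at $1$ — which is harmless, since if some capped weight already reaches $1$ on a point of $C \in \mathcal F$ then $C$ is covered by that point alone, and otherwise no weight inside $C$ was touched by the cap — it sends any fractional transversal to a fractional transversal of no larger total weight. Restricting the ground set to $P_0$ can only increase $\tau^*$ and trivially cannot decrease $\tau$, so in fact neither $\tau(\mathcal F)$ nor $\tau^*(\mathcal F)$ changes, and we may assume $X = P_0$.

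Now we use the identity $\tau^*(\mathcal F) = \nu^*(\mathcal F) = t$: an optimal fractional transversal $\phi : P_0 \to [0,1]$ satisfies $\sum_x \phi(x) = t$ and $\sum_{x \in C} \phi(x) \geq 1$ for every $C \in \mathcal F$, so $\mu := \phi / t$ is a probability measure on $P_0$ with $\mu(C) \geq 1/t$ for every $C \in \mathcal F$. The task has thus become: assuming $(X,\mathcal C)$ admits the fractional Helly theorem for $k$-tuples (with some function $\beta$), produce a set $N \subseteq X$ whose size is bounded in terms of $t$ alone and which meets every $C \in \mathcal C$ with $\mu(C) \geq 1/t$ — a weak $(1/t)$-net for $\mu$ — because any such $N$ is automatically a transversal of $\mathcal F$, yielding $\tau(\mathcal F) \leq g(1/t) =: f(t)$ with $g$ the resulting net bound.

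Greedy gets partway to a weak net: among any $M$ sets of $\mu$-measure $\geq 1/t$, averaging produces a point lying in at least $M/t$ of them, hence in a positive fraction $\alpha_0 = \alpha_0(t,k)$ of their $\binom{M}{k}$ many $k$-subsets once $M$ exceeds some $M_0(t,k)$; so the fractional Helly theorem yields a single point lying in a positive fraction $c = c(t,k)$ of the $M$ sets, and iterating pierces all of $\mathcal F$ with roughly $\log_{1/(1-c)}|\mathcal F| + M_0(t,k)$ points. The genuine difficulty is removing the dependence on $|\mathcal F|$. For this one replaces $\mu$ by a sample of bounded size $n = n(t,k)$ and pierces only the (boundedly many) traces of heavy sets on the sample, running the greedy step there; a further double-sampling argument is then needed to ensure that a net for the sample is a net for $\mu$ itself. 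This transfer — which is exactly where the fractional Helly hypothesis does its essential work and which keeps $|N|$ bounded by a function of $t$ only — is the main obstacle, and it is the substantive content of \cite{alon2002transversal}; I would invoke that result rather than reprove it.
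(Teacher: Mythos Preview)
The paper does not prove Theorem~\ref{bou1} at all; it is quoted from \cite{alon2002transversal} and used as a black box, exactly as you propose to use it. Your sketch of the Alon--Kalai--Matou{\v{s}}ek--Meshulam argument is broadly accurate --- reduce to a finite ground set, convert an optimal fractional transversal into a probability measure making every member heavy, and then appeal to the weak $\varepsilon$-net theorem of \cite{alon2002transversal} (whose proof is where the fractional Helly hypothesis is genuinely exploited) --- so there is nothing to compare: both the paper and you cite the result rather than prove it. One small remark: in your greedy paragraph the invocation of fractional Helly is redundant, since the averaging step already hands you a point in $M/t$ sets; fractional Helly only enters essentially in the sampling argument that removes the dependence on $|\mathcal F|$, which you correctly identify as the substantive content of \cite{alon2002transversal}.
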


Then the following result can be used to bound $\tau^{\ast} (\mathcal F)$.
\begin{theorem}[Alon, Kleitman \cite{alon1992piercing}]\label{bou2}
    Let $\mathcal F$ be a finite family of sets and $\gamma > 0$. Then $\nu^{\ast}(\mathcal F) \leq \gamma$ if and only if every blown-up copy of $\mathcal F$, say $\mathcal F^{\ast}$, which contains $m(C) \in \mathbb N$ copy of each $C \in \mathcal F$,  contains an intersecting subfamily of size at least $\gamma^{-1}|\mathcal F^{\ast}|$.
\end{theorem}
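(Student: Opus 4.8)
The plan is to prove both directions simultaneously by rewriting the theorem as an equivalence between an inequality quantified over integer vectors and the same inequality quantified over real vectors, and then reducing the latter to $\nu^{\ast}(\mathcal F)\le\gamma$ via the identity $\nu^{\ast}=\tau^{\ast}$. I assume, as is standard, that all members of $\mathcal F$ are non-empty and (replacing $X$ by one point per non-empty atom of the Boolean algebra generated by $\mathcal F$) that $X$ is finite. First I would reformulate the right-hand side. For $m=(m(C))_{C\in\mathcal F}\in\mathbb N^{\mathcal F}$ let $\mathcal F^{\ast}_m$ be the blow-up with $m(C)$ copies of each $C$, so $|\mathcal F^{\ast}_m|=\sum_C m(C)$. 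Since a subfamily of $\mathcal F^{\ast}_m$ is intersecting precisely when its members have a common point, the largest intersecting subfamily of $\mathcal F^{\ast}_m$ has size exactly $\mu(m):=\max_{x\in X}\sum_{C\in\mathcal F:\,x\in C}m(C)$, realized by keeping every copy of every set through an optimal point $x$. Hence the right-hand condition of the theorem is exactly: $\mu(m)\ge\gamma^{-1}\sum_C m(C)$ for all $m\in\mathbb N^{\mathcal F}$.

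Next I would reformulate the left-hand side. By LP duality $\nu^{\ast}(\mathcal F)=\tau^{\ast}(\mathcal F)$, and because the sets are non-empty the box constraint $f(C)\le 1$ in the definition of $\nu^{\ast}$ is automatically implied by the covering constraints $\sum_{C\ni x}f(C)\le 1$; thus $\nu^{\ast}(\mathcal F)$ is the value of the homogeneous linear program $\max\{\sum_C g(C):g\ge 0,\ \sum_{C\ni x}g(C)\le 1\ \forall x\in X\}$. By homogeneity this means $\nu^{\ast}(\mathcal F)\le\gamma$ if and only if $\mu(g)\ge\gamma^{-1}\sum_C g(C)$ for every $g\in\mathbb R_{\ge 0}^{\mathcal F}$, where $\mu$ is extended to real vectors by the same max-of-linear-functions formula. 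The theorem now follows: ``$\Rightarrow$'' is just the restriction of this inequality to $g=m\in\mathbb N^{\mathcal F}$; for ``$\Leftarrow$'', the inequality for all integer $m$ scales (via $m\mapsto m/N$) to all nonnegative rational $g$, and then density of $\mathbb Q^{\mathcal F}_{\ge 0}$ in $\mathbb R^{\mathcal F}_{\ge 0}$ together with continuity of $g\mapsto\mu(g)$ and $g\mapsto\sum_C g(C)$ upgrades it to all real $g\ge 0$, i.e.\ to $\nu^{\ast}(\mathcal F)\le\gamma$. Alternatively, ``$\Rightarrow$'' can be shown directly: $\nu^{\ast}(\mathcal F^{\ast}_m)\le\nu^{\ast}(\mathcal F)\le\gamma$ since blowing up does not increase the fractional matching number, so normalizing an optimal fractional transversal of $\mathcal F^{\ast}_m$ to a probability distribution $p$ on $X$ gives $p(C^{\ast})\ge\gamma^{-1}$ for every $C^{\ast}\in\mathcal F^{\ast}_m$; then $\sum_{C^{\ast}}p(C^{\ast})\ge\gamma^{-1}|\mathcal F^{\ast}_m|$ forces some point $x$ into at least $\gamma^{-1}|\mathcal F^{\ast}_m|$ of the sets, an intersecting subfamily of the required size.

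I do not expect a genuine obstacle: the statement is essentially a repackaging of LP duality and the boundedness of the relevant polytope. The two places that need a little care are (i) checking that the bounded programs defining $\nu^{\ast}$ and $\tau^{\ast}$ agree with their unbounded relaxations — this is exactly where non-emptiness of the sets is used — and (ii) recording that the maximum intersecting subfamily of a blow-up is always concentrated at a single common point, so that the right-hand quantity is exactly the piecewise-linear functional $\mu$; granting these, the equivalence with $\nu^{\ast}(\mathcal F)\le\gamma$ is only scaling and density.
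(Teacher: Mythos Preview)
The paper does not give its own proof of this theorem: it is quoted as a result of Alon and Kleitman and used as a black box in the proofs of Lemma~\ref{1cpq} and Lemma~\ref{2cpq}. So there is no in-paper argument to compare your proposal against.

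That said, your proposal is correct and is essentially the standard argument. Your three observations are all sound: (i) in any blow-up $\mathcal F^{\ast}_m$ the largest subfamily with a common point has size exactly $\mu(m)=\max_{x\in X}\sum_{C\ni x}m(C)$; (ii) once the redundant constraint $f(C)\le 1$ is dropped (and this is precisely where non-emptiness of the sets is needed), the condition $\nu^{\ast}(\mathcal F)\le\gamma$ becomes, by homogeneity, the inequality $\mu(g)\ge\gamma^{-1}\sum_{C}g(C)$ for every real $g\ge 0$; and (iii) the integer and real versions of this inequality are equivalent by scaling and continuity. The alternative direct argument for ``$\Rightarrow$'' via $\nu^{\ast}(\mathcal F^{\ast}_m)\le\nu^{\ast}(\mathcal F)$ and an averaging over an optimal fractional transversal is also fine and is closer in spirit to how the result is actually deployed later in the paper (where the blow-ups arise from clearing denominators of a rational optimizer for $\nu^{\ast}$).
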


In summary, to establish all our results, it suffices to prove Theorem~\ref{theorem: collapsibility}, the Radon number in Theorem~\ref{theorem: Helly-type theorems}, the lower bounds of the Helly number and colorful Helly number in special cases, Lemma~\ref{lemma: partial colorful Helly number}, Lemma~\ref{1cpq}, Lemma~\ref{lemma: colorful fractional Helly number}, Lemma~\ref{2cpq}, Theorem~\ref{theorem: colorful helly for k-intersecting}, and Theorem~\ref{theorem: fractional helly for k-intersecting}. We will begin by proving Theorem~\ref{theorem: colorful helly for k-intersecting} and Theorem~\ref{theorem: fractional helly for k-intersecting}, as their key ideas are instrumental in the proof of Theorem~\ref{theorem: collapsibility}.

\section{Proofs of Theorem~\ref{theorem: colorful helly for k-intersecting} and Theorem~\ref{theorem: fractional helly for k-intersecting}}
The general approach follows the well-known strategy used to prove the colorful Helly theorem and the fractional Helly theorem for the standard convexity space $(\mathbb R^d, \mathcal C(\mathbb R^d))$; refer to the second proof of Theorem 12.1 for the colorful Helly theorem and the proof of Theorem 9.1 for the fractional Helly theorem in \cite{barany2021combinatorial}. 

The core is to find some function such that the maximum of every non-empty intersection that we concern is attained by a unique ``point'' and the intersection of some bounded number of sets among all sets forming the intersection also has this maximum. Our function is constructed in the following way.

Let $\hat{\mathcal C}_{\equiv} (P)$ be the family of sets consisting of all intersections of finite subfamilies of $\mathcal C_{\equiv} (P)$ that are $k$-intersecting, that is, \[ \hat{\mathcal C}_{\equiv} (P) \coloneqq \{\bigcap \mathcal C : \mathcal C \subseteq \mathcal C_{\equiv} (P), \mathcal C \text{ is $k$-intersecting} \} . \] 
Define a function $f: \hat{\mathcal C}_{\equiv} (P) \to \overline{\mathbb R}^d$, where $\overline{\mathbb R} = \mathbb R \cup  \{ - \infty , + \infty\}$, such that $f(\hat{C})= (x_1,\dots,x_d)$, where  \[  x_i = \begin{cases}   
    \sup \hat{C}^{(i)} & \text{ if } \hat{C}^{(i)} \neq \emptyset, \\
    -\infty & \text{ if } \hat{C}^{(i)} = \emptyset,
\end{cases} \]
and $\hat{C}^{(i)}$ is the $i$-th level of $d$-interval $\hat{C}$ as defined in Definition~\ref{de8}.

\begin{lemma}\label{lem2}
    Let $n$ be a positive integer. If $n$ sets of $\mathcal C_{\equiv} (P)$, say $ \mathcal D = \{ C_1, \dots, C_n \}$, $k$-intersect, then there is a subfamily of size at most $2d-k$, say $\mathcal D'$, such that $f(\bigcap \mathcal D) = f(\bigcap \mathcal D')$. 
\end{lemma}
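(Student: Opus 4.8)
The plan is to analyse each level $i\in[d]$ independently, extract from $\mathcal D$ a small subfamily $\mathcal D_i'$ that already fixes the $i$-th coordinate of $f$, and then set $\mathcal D'=\bigcup_{i\in[d]}\mathcal D_i'$. For $C\in\mathcal D$ let $I_C^{(i)}$ denote the smallest interval of $\mathbb R$ containing the $i$-th level $C^{(i)}$; since $C$ is the trace on $P$ of a $d$-interval whose $i$-th level is convex, one has $C^{(i)}=I_C^{(i)}\cap P^{(i)}$, where $P^{(i)}=\{x:(x,i)\in P\}$. Write $J_i=\bigcap_{C\in\mathcal D}I_C^{(i)}$, so the $i$-th level of $\bigcap\mathcal D$ is $J_i\cap P^{(i)}$ and its $f$-coordinate is $x_i=\sup(J_i\cap P^{(i)})$ (taken to be $-\infty$ when $J_i\cap P^{(i)}=\emptyset$). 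Because $\mathcal D$ is finite, whenever $J_i\neq\emptyset$ we have $\sup J_i=\min_{C\in\mathcal D}\sup I_C^{(i)}$ and $\inf J_i=\max_{C\in\mathcal D}\inf I_C^{(i)}$, both attained; the members attaining these extrema are the ones I will keep. The point behind the bound is that a level that is non-empty in $\bigcap\mathcal D$ costs only one set, while an empty one costs at most two.

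For a non-empty level $i$ (i.e.\ $J_i\cap P^{(i)}\neq\emptyset$): pick $C_{j_0}\in\mathcal D$ attaining $\min_C\sup I_C^{(i)}$, chosen so that, if $\sup J_i\notin J_i$, then also $\sup J_i\notin I_{C_{j_0}}^{(i)}$ (a member excluding $\sup J_i$ must attain this minimum, so such a choice exists). Then $I_{C_{j_0}}^{(i)}\supseteq J_i$ and the two intervals have the same supremum with the same endpoint-behaviour there, and one checks $\sup\!\big(I_{C_{j_0}}^{(i)}\cap P^{(i)}\big)=x_i$: the inequality ``$\geq$'' is clear, and for ``$\leq$'' any $p\in I_{C_{j_0}}^{(i)}\cap P^{(i)}$ with $p>x_i$ satisfies $\inf J_i\le x_i<p\le\sup J_i$, hence $p\in J_i$, hence $p\in J_i\cap P^{(i)}$, contradicting $x_i=\sup(J_i\cap P^{(i)})$. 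Put $\mathcal D_i'=\{C_{j_0}\}$. For an empty level $i$: if $J_i=\emptyset$, one-dimensional Helly gives two members of $\mathcal D$ whose $i$-th intervals are disjoint, and we take those two; if $J_i\neq\emptyset$ but $J_i\cap P^{(i)}=\emptyset$, take $C_{j_0}$ attaining $\min_C\sup I_C^{(i)}$ and $C_{j_1}$ attaining $\max_C\inf I_C^{(i)}$, each chosen to match the open/closed behaviour of $J_i$ at the relevant endpoint, so that $I_{C_{j_0}}^{(i)}\cap I_{C_{j_1}}^{(i)}$ is an interval with the same two endpoints and the same endpoint-memberships as $J_i$ and therefore equals $J_i$; then $C_{j_0}^{(i)}\cap C_{j_1}^{(i)}=J_i\cap P^{(i)}=\emptyset$.

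Finally set $\mathcal D'=\bigcup_{i\in[d]}\mathcal D_i'$. Since $\mathcal D$ is $k$-intersecting, at least $k$ levels are non-empty, so at most $d-k$ are empty and $|\mathcal D'|\le 1\cdot(\#\text{non-empty levels})+2\cdot(\#\text{empty levels})\le 2d-k$. To see $f(\bigcap\mathcal D')=f(\bigcap\mathcal D)$, use $\mathcal D_i'\subseteq\mathcal D'\subseteq\mathcal D$ to get $(\bigcap\mathcal D)^{(i)}\subseteq(\bigcap\mathcal D')^{(i)}\subseteq(\bigcap\mathcal D_i')^{(i)}$ for every $i$: on a non-empty level the two outer sets have supremum $x_i$ by construction, so the middle one does as well, and on an empty level the outer right-hand set is empty, forcing the middle one to be empty too. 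The step I expect to fight hardest with is the non-empty-level case: since $P^{(i)}$ is an arbitrary subset of $\mathbb R$ and $\sup(J_i\cap P^{(i)})$ need not be attained, one has to check carefully — through the endpoint-matching choice of $C_{j_0}$ together with a short case distinction on whether $\sup J_i$ lies in $J_i$ and/or in $P^{(i)}$ — that a single extremal member really does cap the level-supremum at exactly $x_i$; the endpoint-matching claim in the empty case is the other technical ingredient but is comparatively routine.
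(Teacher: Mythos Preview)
Your argument is correct and follows the same level-by-level strategy as the paper: select one witness for each non-empty level (the set realising the minimal right endpoint) and at most two witnesses for each empty level (via one-dimensional Helly), then take the union and count $\le k + 2(d-k)=2d-k$. The only substantive difference is that the paper first reduces to compact sets (``since $\mathcal D$ is finite, we may assume that the sets in $\mathcal D$ are compact''), which turns every $\sup$ into an attained $\max$ and collapses your endpoint-matching case analysis into a one-line choice; you instead work directly with arbitrary members of $\mathcal C_{\equiv}(P)$, which costs you the extra bookkeeping about open versus closed endpoints and the subcase $J_i\neq\emptyset$ but $J_i\cap P^{(i)}=\emptyset$, but makes the lemma self-contained without that reduction.
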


\begin{proof}[Proof of Lemma~\ref{lem2}]
    Since $\mathcal D$ is finite, we may assume that the sets in $\mathcal D$ are compact. Hence, we have $x_i \neq + \infty$ for all $i\in [d]$ and $\sup \hat{C}^{(i)} = \max \hat{C}^{(i)} \in \hat{C}^{(i)}$. We construct $\mathcal D'$ based on $f(\bigcap \mathcal D)= (a_1,\dots,a_d)$ in the following way. 
    
    For $i\in [d]$, if $a_i =- \infty$, then we have $\bigcap_{j=1}^n C_{j}^{(i)} =\emptyset$. Note that it implies $\bigcap_{j=1}^n I(C_{j}^{(i)}) \cap P =\emptyset$, where $I(C) \coloneqq \{\bigcap I : C \subseteq I, I \subseteq \mathbb R \times [d] \text{ is a $d$-interval} \}$ throughout the paper. Since all sets are compact, by the definition of $I(C)$, two endpoints of $I(C_{j}^{(i)})$ are in $C_{j}^{(i)} \subseteq P$. Then we have $\bigcap_{j=1}^n I(C_{j}^{(i)}) = \emptyset$, otherwise two endpoints of $\bigcap_{j=1}^n I(C_{j}^{(i)})$ are in $P$, which contradicts $\bigcap_{j=1}^n I(C_{j}^{(i)}) \cap P =\emptyset$. By Helly's theorem in $\mathbb R$, there are two of them, say $I(C_{j_1}^{(i)}), I(C_{j_2}^{(i)})$, such that $I(C_{j_1}^{(i)}) \cap  I(C_{j_2}^{(i)}) = \emptyset$. Then we include $C_{j_1}$ and $C_{j_2}$ in $\mathcal D'$.
    
    If $a_i \neq -\infty$, then there exists $C_{j}^{(i)}$ such that $\sup C_{j}^{(i)} = a_i$. We include $C_{j}$ in $\mathcal D'$. Note that $f(\bigcap  \mathcal D ) =f(\bigcap \mathcal D')$. Moreover, since $\mathcal D$ $k$-intersects, there are at most $2(d-k) + k = 2d-k$ sets that are included in $\mathcal D'$.
\end{proof}

\subsection{Proof of Theorem~\ref{theorem: colorful helly for k-intersecting}}
Let $\mathcal C_1,\dots,\mathcal C_{2d-k+1}$ be finite subfamilies of $\mathcal C_{\equiv} (P)$, which satisfy the colorful Helly property with respect to $k$-intersecting. We shall show that there exists an index $i\in[2d-k+1]$ such that $\mathcal C_i$ is $k$-intersecting.

Among all colorful $2d-k$ tuples, denoted by $\hat{\mathcal C}$, consider $ \{f(\hat{C}) :  \hat{C} \in \hat{\mathcal C}  \}$ and choose the lexicographically minimal one, say $f(\hat{C_0})$. Without loss of generality, we may assume that $\hat{C_0} = \bigcap_{i=1}^{2d-k} C_i$, where $C_i \in \mathcal C_i$. Consider the first $k$ coordinates of $f(\hat{C_0})$ that are not $- \infty$, say $a_{i_j}$ in $i_j$-th level, where $j\in [k]$ and $i_1< \dots <i_k$. We claim that all sets of $\mathcal C_{2d-k+1}$ contain $k$ points $(a_{i_j} , i_j)$, which leads to $h_{ck}(P,\mathcal C_{\equiv} (P)) \leq 2d-k+1$.

Indeed, assume for the sake of contradiction that there is $C_{2d-k+1} \in \mathcal C_{2d-k+1}$ that avoids one of these $k$ points. Then note that $f(\bigcap_{i=1}^{2d-k+1} C_i) <_{\text{lex}} f(\hat{C_0})$, where $ <_{\text{lex}} $ ($ \leq_{\text{lex}} $, respectively) means less than (less than or equal to, respectively) with respect to the lexicographical order throughout the paper. By Lemma~\ref{lem2}, there exist at most $2d-k$ sets among $C_1,\dots,C_{2d-k+1}$, without loss of generality, say $C_2,\dots,C_{2d-k+1}$, such that $f(\bigcap_{i=2}^{2d-k+1} C_i) = f(\bigcap_{i=1}^{2d-k+1} C_i) <_{\text{lex}} f(\hat{C_0})$, contradicting the minimality of $f(\hat{C_0})$.

\subsection{Proof of Theorem~\ref{theorem: fractional helly for k-intersecting}}
Let $\alpha \in (0,1]$ and $\mathcal C$ be a finite subfamily of $\mathcal C_{\equiv} (P)$ of size $n$. We claim that if there are at least $\alpha \binom{n}{2d-k+1}$ $k$-intersecting $2d-k+1$ tuples, then there exists a $k$-intersecting subfamily $\mathcal C' \subseteq \mathcal C$ of size at least $\frac{\alpha}{2d-k+1} n$, which implies that $h_{fk}(P,\mathcal C_{\equiv} (P)) \leq 2d-k+1$.

Since $\mathcal C$ is finite, we may assume that sets are compact. By Lemma~\ref{lem2}, there are at least $\frac{\alpha \binom{n}{2d-k+1}}{\binom{n}{2d-k}}$ distinct $2d-k+1$ tuples, denoted by $ \tilde{\mathcal C}$, such that for any $\mathcal C' \in \tilde{\mathcal C}$ we have $f(\bigcap \mathcal C') = f(\bigcap \mathcal C_0)$, where $\mathcal C_0$ is some $k$-intersecting $2d-k$ tuple. It implies that there are at least
$$
\begin{aligned}
    \frac{\alpha \binom{n}{2d-k+1}}{\binom{n}{2d-k}} + 2d-k  &= \alpha \frac{n-2d+k}{2d-k+1} +2d-k   \\
    &  \geq \frac{\alpha}{2d-k+1} n 
\end{aligned}
$$
sets that contain all $(a_{i_j},i_j) $, where $a_{i_j}$ are the non-infinite coordinates of $f(\bigcap \mathcal C_0)$. Since $\mathcal C_0$ is $k$-intersecting and sets are compact, there are at least $k$ non-infinite coordinates, which implies that those sets are $k$-intersecting.

\section{Proof of Theorem~\ref{theorem: collapsibility}}
The general approach follows the proof in \cite{wegner1975d}. Instead of sweeping along a line, we sweep along the disjoint lines in lexicographic order. Let $f=(f_0, f_1,f_2,\dots)$ be the $f$-vector of $K(\mathcal C)$, that is, $f_k$ is the number of $k$-dimensional simplices of $K(\mathcal C)$. We prove the theorem by induction on $m \coloneqq \sum_{i=0}^{2d-1}f_i$. When $m=0$, we have $K(\mathcal C) = \emptyset$, which is $(2d-1)$-collapsible. Then the inductive argument is given by the following lemma.

\begin{lemma}\label{lem1}
    If non-empty complex $K$ is the nerve of some family $\mathcal C \subseteq \mathcal C_{\equiv} (P)$, then there is a free face $\sigma$ of $K$ with $\dim \sigma \leq 2d-2$ such that $\coll (K,\sigma)$ is also the nerve of some family $\mathcal G \subseteq \mathcal C_{\equiv} (P)$.
\end{lemma}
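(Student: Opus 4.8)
The plan is to adapt Wegner's sweeping proof of $d$-collapsibility for convex sets in $\mathbb{R}^d$, sweeping through the $d$ levels one after another — that is, in the lexicographic order on $\overline{\mathbb R}^d$ recorded by the function $f$ used in the proof of Lemma~\ref{lem2} (taken with $k=1$) — and to invoke Lemma~\ref{lem2} itself to keep the collapsed face small.

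First I would make the usual reductions. Since $\mathcal C$ is finite, we may assume every member is compact and non-empty: replace each $C_j=I_j\cap P$ by $\tilde I_j\cap P$ for a compact sub-$d$-interval $\tilde I_j\subseteq I_j$ realizing the same nerve, and discard any member equal to $\emptyset$ (it contributes no face). I keep the operator $I(\cdot)$ from the proof of Lemma~\ref{lem2} at hand: for compact $C$ the $i$-th level of $I(C)$ is the closed interval $\conv C^{(i)}$ and its endpoints lie in $C^{(i)}\subseteq P^{(i)}$ — this is what lets one pass between intersections of the $C_j$ inside $P$ and intersections of honest intervals on the line.

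Next I construct the free face. Let $v=(v_1,\dots,v_d)\in\overline{\mathbb R}^d$ be the lexicographically smallest value of $f\!\left(\bigcap_{j\in T}C_j\right)$ over all non-empty faces $T$ of $K$; put $L=\{\,i: v_i\neq-\infty\,\}$, which is non-empty because $K\neq\emptyset$, and note that by compactness the point $p_i:=(v_i,i)\in P$ lies in $\bigcap_{j\in T}C_j$ for every face $T$ attaining $v$. Applying Lemma~\ref{lem2} (with $k=1$) to such a face produces a face $\sigma$ with $f\!\left(\bigcap_{j\in\sigma}C_j\right)=v$ and $|\sigma|\le 2d-1$, hence $\dim\sigma\le 2d-2$. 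To see that $\sigma$ is free, note that $f$ is monotone under inclusion, so every face $\tau\supseteq\sigma$ has $f\!\left(\bigcap_{j\in\tau}C_j\right)\leq_{\text{lex}}v$; minimality of $v$ forces equality, and then for each $i\in L$ the $i$-th level of $\bigcap_{j\in\tau}C_j$ is non-empty, compact, and has supremum $v_i$, so $p_i\in C_j$ for every $j\in\tau$. Hence every face containing $\sigma$ is contained in $\Sigma:=\{\,j: p_i\in C_j\text{ for all }i\in L\,\}$, which is itself a face containing $\sigma$; thus $\Sigma$ is the unique inclusion-maximal face containing $\sigma$, $\sigma$ is free, and $\coll(K,\sigma)=K\setminus\{\,\tau\in K:\sigma\subseteq\tau\subseteq\Sigma\,\}$.

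It remains to realize $\coll(K,\sigma)$ as the nerve of a family $\mathcal G\subseteq\mathcal C_{\equiv}(P)$. When $\dim\sigma=0$, say $\sigma=\{j_0\}$, take $\mathcal G=\mathcal C\setminus\{C_{j_0}\}$. In general I would modify $\mathcal C$ by intersecting one — or, if necessary, several — of the sets $C_j$, $j\in\sigma$, with $d$-intervals $J$ that agree with $\mathbb R$ on the levels outside $L$ and, on each level $i\in L$, are disjoint from the $i$-th level of $\bigcap_{j\in\sigma}C_j$ but extend as far as possible in the direction away from it; the first property makes $\bigcap_{j\in\sigma}C_j$ empty in the new family, killing $\sigma$ together with its star, while the second is meant to disturb as little else as possible. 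The technical heart — and the step I expect to be the main obstacle — is to choose these $d$-intervals, after breaking ties among the relevant endpoints, so that the faces destroyed by the modification are \emph{precisely} the $\tau$ with $\sigma\subseteq\tau\subseteq\Sigma$ and no new face is created: in other words, so that the geometric perturbation realizes exactly the combinatorial interval $[\sigma,\Sigma]$. Checking this amounts to verifying, for each face $\tau$ containing a modified index but not containing $\sigma$, that $\bigcap_{j\in\tau}C_j$ still meets $J$, and the bookkeeping is delicate precisely because of the ambient $P$, so that level-intersections are traces on $P^{(i)}$ rather than genuine intervals and one must again argue through the operator $I(\cdot)$. Once this is done, $\coll(K,\sigma)=K(\mathcal G)$ is a nerve of a family in $\mathcal C_{\equiv}(P)$ with strictly fewer faces, and the induction on $\sum_{i=0}^{2d-1}f_i$ in the proof of Theorem~\ref{theorem: collapsibility} closes.
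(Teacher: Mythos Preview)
Your construction of the free face is essentially the paper's: take the lexicographic minimum $v$ of $f$ over all non-empty intersections, invoke Lemma~\ref{lem2} with $k=1$ to get $|\sigma|\le 2d-1$, and deduce freeness from monotonicity of $f$ together with compactness. One small difference is that the paper also takes $\sigma$ of \emph{minimum cardinality} among faces realising $v$, and it identifies the unique maximal face $\Sigma\supseteq\sigma$ via only the \emph{first} non-$-\infty$ coordinate $a_i$ of $v$, namely $\Sigma=\{j:(a_i,i)\in C_j\}$, rather than via the full set $\{p_i:i\in L\}$. The two descriptions of $\Sigma$ coincide, but the single-level viewpoint is what drives the paper's next step.

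The genuine gap in your proposal is the realization of $\coll(K,\sigma)$ as a nerve, which you explicitly flag as ``the main obstacle'' and do not carry out. Your suggested modification --- intersecting one or several $C_j$, $j\in\sigma$, with a $d$-interval $J$ that is full on levels outside $L$ and, on every $i\in L$, disjoint from the $i$-th level of $\bigcap_{j\in\sigma}C_j$ --- is more aggressive than needed and, as you yourself suspect, delicate: cutting on all of $L$ simultaneously will typically also destroy faces $\tau\not\supseteq\sigma$ whose intersection happens to live only on those levels. The paper's construction is simpler and keyed to the single distinguished level $i$: for $n\ge2$ it replaces each $C_k$, $k\in[n]$, by
\[
C_k\setminus\Bigl(\{(x,i):x\le a_i\}\cup\bigcup_{j=i+1}^{d}\{(x,j):x\in\mathbb R\}\Bigr),
\]
keeping levels $1,\dots,i-1$ intact, truncating level $i$ to the right of $a_i$, and discarding the higher levels entirely. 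The paper asserts without detailed verification that this yields $\coll(K,\sigma)$, so the step is terse there as well; but at least an explicit candidate $\mathcal G$ is on the table, tied directly to the lexicographic sweep, whereas your outline does not yet produce one.
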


\begin{proof}[Proof of Lemma~\ref{lem1}]
    Let $\mathcal C$ be a finite subfamily of $\mathcal C_{\equiv} (P)$ such that $K$ is the nerve of $\mathcal C$. Since $\mathcal C$ is finite, we may assume that sets are compact. Let $\hat{\mathcal C}$ be the family of sets consisting of all non-empty intersections of members of $\mathcal C$, that is, $\hat{\mathcal C} \coloneqq \{\bigcap \mathcal C' : \mathcal C' \subseteq \mathcal C, \bigcap \mathcal C' \neq \emptyset \}$. Then the members of $\hat{\mathcal C}$ correspond to the simplices of $K$. With a slight abuse of notation, sometimes we may consider $\hat{C} \in \hat{\mathcal C}$ as the set of members of $\mathcal C$ used to construct the intersection instead of the intersection itself.

    Define function $f: \hat{\mathcal C} \to \overline{\mathbb R}^d$, where $\overline{\mathbb R} = \mathbb R \cup  \{ - \infty , + \infty\}$, such that $f(\hat{C})= (x_1,\dots,x_d)$, where  \[  x_i = \begin{cases}   
    \max \hat{C}^{(i)} & \text{ if } \hat{C}^{(i)} \neq \emptyset, \\
    -\infty & \text{ if } \hat{C}^{(i)} = \emptyset.
\end{cases} \]

    Consider $\hat{C}_0= C_1\cap \dots\cap C_n \in \hat{\mathcal C}$, where $C_i \in \mathcal C$ for $i\in [n]$, such that $f(\hat{C}_0)$ is lexicographically minimal among the image of $f$ and $n$ is minimal. Suppose that the corresponding simplex of $\hat{C}_0$ is $\sigma$. We claim that $\sigma$ is the desired free face. 
    
    First, note that by Lemma~\ref{lem2}, we have $n \leq 2d-1$, which implies that $\dim \sigma \leq 2d-2$. Then we shall show that any $C \in \mathcal C$ with $C \cap \hat{C}_0 \neq \emptyset$ contains $(a_i,i)$, where $a_i$ is the first coordinate of $f(\hat{C}_0)$ such that $a_i \neq - \infty$. It implies that $C_1,\dots,C_n$ are contained in a unique inclusion-maximal $\mathcal C' \subseteq \mathcal C$ such that $\bigcap \mathcal C' \neq \emptyset$. Hence, the corresponding face $\sigma$ is free. 
    
    Assume for the sake of contradiction that there exists $C \in \mathcal C$ with $C \cap \hat{C}_0 \neq \emptyset$ such that $C$ does not contain $(a_i ,i)$. Let $f(C \cap \hat{C}_0) =(b_1,\dots,b_d)$. Note that $b_1=\dots=b_{i-1} = -\infty$ and $b_i < a_i$. It implies that $f(C \cap \hat{C}_0)$ is lexicographically less than $f( \hat{C}_0)$, which contradicts the minimality of $f(\hat{C}_0)$.

    It remains to show that $\coll(K,\sigma)$ is the nerve of some family $\mathcal G \subseteq \mathcal C_{\equiv} (P)$. When $n=1$, note that the nerve of $\mathcal C \setminus C_1$ is $\coll(K,\sigma)$. When $n\geq 2$, note that the family obtained from $\mathcal C$ by replacing $C \in \mathcal C$ by \[ C \setminus \Big( \{(x,i):x \leq a_i \} \cup  \bigcup_{j=i+1}^d \{(x,j): x\in \mathbb R \} \Big) \] for $k\in [n]$ has $\coll(K, \sigma)$ as its nerve, where $a_i$ is the first coordinate of $f(\hat{C}_0)$ such that $a_i \neq - \infty$.
\end{proof}

\section{Proof of Lemma~\ref{1cpq}}
The proof follows the same approach as the proof of Theorem 4.1 in \cite{barany2003fractional}. What we do is only point out that the proof works for the abstract settings and summarize the exact conditions. We adopt the format from proof of Theorem 29.1 in \cite{barany2021combinatorial}. In order to prove Lemma~\ref{1cpq}, we need the following Erd{\H{o}}s-Simonovits theorem \cite{erdHos1983supersaturated}.

\begin{theorem}\label{erd}
    For any positive integers $k, t$ and any $c>0$ there exists $c' >0$ such that a $k$-uniform hypergraph on $n$ vertices with at least $c n^k$ edges contains at least $c' n^{kt}$ copies of $K^k(t)$, that is, complete $k$-uniform $k$-partite hypergraph with each class of size $t$.
\end{theorem}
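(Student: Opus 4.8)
The plan is to deduce Theorem~\ref{erd} from the ``degenerate'' Tur\'an estimate for $K^k(t)$ combined with the standard averaging trick underlying all supersaturation results. The only genuinely non-elementary ingredient is the bound $\mathrm{ex}(m;K^k(t)) = o(m^k)$: by the K\H{o}v\'ari--S\'os--Tur\'an theorem (for $k=2$) and its hypergraph extension due to Erd\H{o}s, there is a constant $C=C(k,t)$ such that every $k$-uniform hypergraph on $m$ vertices with more than $C\,m^{\,k - t^{1-k}}$ edges contains a copy of $K^k(t)$. Hence, given $c>0$, I would set $\delta \coloneqq c\,k!/2$ and fix an integer $m_0=m_0(k,t,c)$ such that \emph{every} $k$-uniform hypergraph on $m\ge m_0$ vertices with at least $\delta\binom{m}{k}$ edges contains a copy of $K^k(t)$. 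Then choose once and for all an $m$ with $m \ge \max\{m_0, kt\}$; it will serve as a constant-size ``sampling window'', and it is crucial that $m$ depends only on $k,t,c$ and not on $n$.

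The supersaturation step is the following averaging argument. Let $H$ be a $k$-uniform hypergraph on $n$ vertices with $e(H)\ge c n^k$; we may assume $n$ is larger than any given constant (in particular than $m$), which is harmless since the theorem is invoked with $n\to\infty$. Sampling a uniformly random $m$-subset $S\subseteq V(H)$, linearity of expectation gives $\mathbb E\bigl[e(H[S])\bigr] = e(H)\,\binom{m}{k}\big/\binom{n}{k}$, and since $\binom{n}{k} = (1+o(1))\,n^k/k!$ this is at least $(1-o(1))\,c\,k!\,\binom{m}{k} \ge \tfrac34 c\,k!\,\binom{m}{k} > \delta\binom{m}{k}$ once $n$ is large enough. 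Because $e(H[S]) \le \binom{m}{k}$ deterministically, a routine averaging inequality then produces a constant $p = p(c,k) > 0$ with $e(H[S]) \ge \delta\binom{m}{k}$ for at least a $p$-fraction of all $m$-subsets $S$; by the first paragraph, every such $S$ contains a copy of $K^k(t)$.

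Finally I would count. Let $N$ be the number of copies of $K^k(t)$ in $H$. A fixed copy spans exactly $kt$ vertices, hence lies in precisely $\binom{n-kt}{m-kt}$ of the $m$-subsets; since at least $p\binom{n}{m}$ of the $m$-subsets contain some copy, $N\binom{n-kt}{m-kt}\ge p\binom{n}{m}$, and therefore
\[
 N \;\ge\; p\,\frac{\binom{n}{m}}{\binom{n-kt}{m-kt}} \;=\; p\,\frac{\binom{n}{kt}}{\binom{m}{kt}} \;\ge\; \frac{p}{\binom{m}{kt}}\Bigl(\frac{n}{kt}\Bigr)^{\!kt} \;=\; c'\,n^{kt},
\]
with $c' \coloneqq p\big/\bigl(\binom{m}{kt}\,(kt)^{kt}\bigr)$ a positive constant depending only on $c,k,t$; this is the desired conclusion.

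I do not anticipate a real obstacle: the scheme is classical, and its only external input -- the degenerate hypergraph Tur\'an bound $\mathrm{ex}(m;K^k(t))=o(m^k)$ -- is a theorem of Erd\H{o}s. The one point requiring care is the one flagged above, namely that the window size $m$ must be fixed as a constant in $k,t,c$ \emph{before} $n$ grows, so that $c'$ neither depends on nor degrades with $n$; after that, tracking the $o(1)$ terms in the expectation computation is purely mechanical.
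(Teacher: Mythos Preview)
Your argument is correct and is, in fact, essentially the original Erd\H{o}s--Simonovits proof: fix a constant-size window $m$ large enough that the degenerate Tur\'an bound $\mathrm{ex}(m;K^k(t))=o(m^k)$ forces a copy of $K^k(t)$ whenever the induced density exceeds $\delta$, average over $m$-subsets to see that a positive fraction of them are that dense, and double count. The averaging step and the identity $\binom{n}{m}\big/\binom{n-kt}{m-kt}=\binom{n}{kt}\big/\binom{m}{kt}$ are handled cleanly, and you correctly flag that $m$ must be frozen before $n$ grows.

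There is nothing to compare against in the paper itself: Theorem~\ref{erd} is not proved there but merely quoted from \cite{erdHos1983supersaturated} as a black box feeding into the proof of Lemma~\ref{1cpq}. So your write-up supplies strictly more than the paper does on this point; if anything, for the paper's purposes only the qualitative consequence ``for $n$ large there is at least one copy of $K^k(t)$'' is used, which already follows from the Erd\H{o}s extremal bound alone without the full supersaturation count.
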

Observe that when $c$ is fixed and $n$ is large enough, there is at least one copy of $K^ k(t)$.

Note that it is enough to show that it holds for $q=k$. Since $h_f(X,\mathcal C) = k$, by Theorem~\ref{bou1}, it is enough to show that $\tau^{\ast} < \infty$ for some $i$ provided that $\mathcal F_1,\dots, \mathcal F_k$ have the first kind of colorful $(p,k)$ property. By linear programming duality, we have $\tau ^{*}(\mathcal F_i) = \nu^{*}(\mathcal F_i)$. Hence it is enough to show that $\nu ^{*} (\mathcal F_i)$ is bounded by a constant that depends only on $p$ and $k$ for some $i \in [k]$. The fractional matching number $\nu^{*} (\mathcal F_i)$ is the value of the following linear programming

\begin{align*}
   & \nu^{*} (\mathcal F_i) = \max \sum_{C_{ij} \in \mathcal F_i} w_i( C_{i, j}) , \\
   & \text{subject to } \sum_{C_{i, j} \in \mathcal F_{i}, p \in C_{i, j}} w_i (C_{i, j}) \leq 1 (\forall x \in X) \text{ and } w_i : \mathcal F_i \to [0,1].
\end{align*}

Let $w _i : \mathcal F_i \to [0,1]$ for which $\nu ^{*} (\mathcal F_i)$ is attained. Since $\mathcal F_i$ are finite, we may assume that the values of $w_i$ are all rational. Let $w_i (C_{i, j}) = \frac{n_{C_{i, j}}}{m_i}$, where $m_i$ is a large enough common denominator of the rationals $w_i(C_{i, j})$. For each family $\mathcal F_i$, let $\mathcal F_{i}^{*}$ be family consisting of $n_{C_{i, j}}$ copies of $C_{i, j}$ for each $C_{i, j} \in \mathcal F_i$. Set $n_i = \sum_{C_{i, j} \in \mathcal F_i} n_{C_{i, j}}$. Now we shall show that there is $i\in [k]$ and $x \in X$ such that $x$ is common to at least $\beta n_i$ elements of $\mathcal F_{i}^*$ for some $\beta > 0$, since then we have \[ 1\geq \sum_{C_{i, j} \in \mathcal F_{i}, x \in C_{i, j}} w_i (C_{i, j}) \geq \beta \frac{n_i}{m_i} = \beta \nu^*(\mathcal F_i),\] which implies $\nu^*(\mathcal F_i) \leq \frac{1}{\beta}$.

Since $h_f(X,\mathcal F) = k$, it is enough to show that the conditions of the fractional Helly theorem are satisfied for some $\mathcal F_i^*$. We form a $k$-uniform $k$-partite hypergraph $\mathcal H$. Its vertices in the $i$-th class are the sets in $\mathcal F_i^*$, and its edges are $k$ tuples of sets from distinct classes that contain a common point.

Let $s$ be an large enough integer that is divisible by $k$. Consider a set of size $s$ from each class, that is, $\mathcal D_i \subseteq \mathcal F_i^*$ of size $s$ for all $i \in [k]$.

\begin{claim}
    For any such $\mathcal D_1,\dots, \mathcal D_{k}$ some $\mathcal D_i$ contains an $k$-tuple of sets that contain a common point.
\end{claim}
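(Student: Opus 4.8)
The plan is to show that the claim is essentially a pigeonhole consequence of the first kind of colorful $(p,q)$ property (with $q=k$) applied to the original families $\mathcal F_1,\dots,\mathcal F_k$, once we unwind what a large set $\mathcal D_i\subseteq\mathcal F_i^*$ of copies looks like in terms of $\mathcal F_i$. First I would observe that since $s$ is a huge integer divisible by $k$, each $\mathcal D_i$ consists of $s$ copies drawn from among the sets of $\mathcal F_i$; by pigeonhole, if $s$ is chosen large compared to $p$ and $|\mathcal F_i|$ we cannot immediately control the number of \emph{distinct} sets appearing, so instead I would argue in the other direction: either $\mathcal D_i$ already contains $p$ copies of a single set $C_{i,j}\in\mathcal F_i$ — in which case those $p$ copies are pairwise "equal" and in particular any $k$ of them (together with suitable sets from the other classes) trivially share a common point once we find one intersecting colorful tuple — or $\mathcal D_i$ contains at least $s/p$ distinct members of $\mathcal F_i$.

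\textbf{Key steps.} (1) Reduce to $q=k$, as the excerpt already notes. (2) For each $i$, extract from $\mathcal D_i$ either $p$ copies of one set, or a subfamily of $p$ distinct sets $C_{i,1},\dots,C_{i,p}\in\mathcal F_i$; this is where largeness of $s$ is used ($s\ge p\cdot\max_i|\mathcal F_i|$ suffices, or more carefully $s \ge p^2$ after first noting $|\mathcal F_i^*|$ may be large but the number of distinct underlying sets is what matters — I would phrase the dichotomy purely in terms of $\mathcal D_i$). (3) In the "$p$ distinct sets in every class" case, apply the first kind of colorful $(p,k)$ property of $\mathcal F_1,\dots,\mathcal F_k$ to the chosen $C_{i,1},\dots,C_{i,p}$: it yields indices $j_1,\dots,j_k\in[p]$ with $\bigcap_{i=1}^k C_{i,j_i}\neq\emptyset$. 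Translating back to $\mathcal F_i^*$, pick one copy of $C_{i,j_i}$ lying in $\mathcal D_i$ for each $i$; these $k$ sets form a colorful $k$-tuple of $\mathcal D_1,\dots,\mathcal D_k$ with a common point, so in particular some $\mathcal D_i$ (indeed all of them, via this tuple) contains a $k$-tuple of sets with a common point — actually the claim only asks for a $k$-tuple \emph{within one} $\mathcal D_i$, so I must be a little careful: the conclusion "$\bigcap_{i=1}^k C_{i,j_i}\neq\emptyset$" gives a \emph{colorful} common point, and since copies are honest duplicates, replacing the copy in $\mathcal D_1$ by any other copy of $C_{1,j_1}$ in $\mathcal D_1$ (which exists because we extracted $p\ge k$ of them or can extract several) still works; so I would instead argue that the number of copies of each $C_{i,j}$ we can keep is large, giving $k$ copies of a common-point configuration inside a single class. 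The cleanest route: apply the colorful property, get $\bigcap_i C_{i,j_i}\ne\emptyset$; then among $\mathcal D_1$, look at the copies of $C_{1,j_1}$ together with the single common point — but they are copies of the \emph{same} set, so "$k$-tuple of sets that contain a common point" is satisfied by any $k$ of those copies provided $\mathcal D_1$ contains $\ge k$ copies of $C_{1,j_1}$. To guarantee that, I would strengthen step (2): choose $s$ so that by pigeonhole some set of $\mathcal F_i$ appears $\ge k$ times in $\mathcal D_i$, OR $\mathcal D_i$ has $\ge p$ distinct members; running the dichotomy with threshold $k$ on one side and $p$ on the other, and using the colorful property to locate which class, closes the argument. (4) In the "some $\mathcal D_i$ has $\ge k$ copies of one set" case there is nothing to prove, since those copies share the common point of that set (which is nonempty, as it comes from a member of $\mathcal F_i$).

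\textbf{Main obstacle.} The delicate point is bookkeeping between the abstract families $\mathcal F_i$ and their blow-ups $\mathcal F_i^*$, and matching the \emph{colorful} conclusion of the $(p,q)$ property (a common point among $k$ sets from distinct classes) with the \emph{monochromatic} conclusion the claim wants (a $k$-tuple inside one $\mathcal D_i$). I expect the resolution to be exactly the multiplicity trick above: a colorful common point among $C_{1,j_1},\dots,C_{k,j_k}$ forces, for the class $i$ where the most copies survive, a monochromatic $k$-tuple of copies of $C_{i,j_i}$ all containing that point, and arranging $s$ large enough (polynomial in $p$ and $k$, independent of the $\mathcal F_i$) makes this automatic. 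Everything else is a direct invocation of the hypothesis $h_{pc}(X,\mathcal C)\le k$ together with Theorem~\ref{erd} downstream; the claim itself needs only the $(p,q)$ property and pigeonhole.
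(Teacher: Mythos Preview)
Your proposal has a genuine gap in the non-trivial branch of the dichotomy. You correctly split into ``some $\mathcal D_i$ contains $k$ copies of one set'' (trivial) versus ``every set appears fewer than $k$ times in every $\mathcal D_i$''. In the second branch you extract $p$ distinct sets from each $\mathcal D_i$, apply the colorful $(p,k)$ property once, and obtain a single \emph{colorful} intersecting tuple $(C_{1,j_1},\dots,C_{k,j_k})$. You then try to upgrade this to a \emph{monochromatic} $k$-tuple by taking $k$ copies of some $C_{i,j_i}$ inside $\mathcal D_i$. But this is precisely what the case assumption forbids: every set, and in particular each $C_{i,j_i}$, appears fewer than $k$ times in $\mathcal D_i$. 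No choice of $s$ (however large, and depending only on $p,k$) repairs this, because the obstruction is the branching hypothesis itself, not the size of $s$.

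The paper's proof handles this branch with two extra ingredients that you relegated to ``downstream'': it does not use the colorful $(p,k)$ property once but for \emph{every} choice of $p$-subsets, thereby showing that the $k$-partite hypergraph $\mathcal H'$ of colorful intersecting tuples on the $s^*=s/k$ distinct sets per class has edge density at least $(1/p)^k$; it then invokes the Erd\H os--Simonovits supersaturation theorem (Theorem~\ref{erd}) to find a complete $k$-partite subhypergraph with classes of size $N_{kpc}(k)$; finally it applies the \emph{partial colorful Helly} hypothesis $h_{pc}(X,\mathcal C)\le k$ to this complete block to produce an intersecting $k$-tuple of \emph{distinct} sets inside a single class. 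In other words, both Theorem~\ref{erd} and the partial colorful Helly number are used inside the proof of this very claim, not afterwards; a single application of the $(p,k)$ property plus pigeonhole is not enough.
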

\begin{proof}
    If some $\mathcal D_i$ contains $k$ copies of the same set, we are done. Hence we may assume that every family contains at most $k-1$ copies of the same set. Then there is $\mathcal D_i' \subseteq \mathcal D_i$ of distinct sets of size $s^* = \frac{s}{k}$. Since for any choice of subfamilies $\mathcal D''_i \subseteq \mathcal D_i'$ of size $p$, there is an edge of $\mathcal H$. It leads to that there are at least $\binom{s^*}{p}^{k}$ such edges and each edge is counted $\binom{s^* -1}{p-1}^{k}$ times. Then there are at least $(s^*/p)^{k}$ edges at the subhypergraph $\mathcal H'$ induced by $\mathcal D'_i$. 

    By Theorem~\ref{erd}, $\mathcal H'$ contains a complete $k$-partite hypergraph with each class of size $N_{kpc}(k)$, where $N_{kpc}(k)$ is the needed size of families in the partial colorful Helly theorem for $k$-tuples and existence of intersecting subfamily of size $k$, since $s$ is large enough. Then by the partial colorful Helly theorem, there is at least one class that contains a $k$-tuple whose intersection contains a common point.
\end{proof}

Then the total number of such $k$-tuples is at least \[ \binom{n_1}{s} \binom{n_2}{s}  \dots \binom{n_{k}}{s} \] and each tuple in the $i$-th class is counted \[ \binom{n_1}{s} \dots \binom{n_i - k }{s-k}  \dots \binom{n_{k}}{s} \] times. Assume $\mathcal F_i^*$ contains $M_i$ such tuples. Then we have \[ \prod_{i=1}^{k} \binom{n_i}{s} = \sum_{i=1}^{k} M_i \binom{n_1}{s} \dots \binom{n_i - k }{s-k}  \dots \binom{n_{k}}{s}  ,\] showing that \[ 1= \sum_{i=1}^{k}M_i \frac{\binom{n_i - k }{s-k}}{\binom{n_i  }{s}} = \sum_{i=1}^{k}M_i \frac{\binom{s }{k}}{\binom{n_i  }{k}} .\] It implies that some $M_i \geq \alpha \binom{n_i}{k}$ with $\alpha = \frac{1}{k} \binom{s}{k}^{-1} >0$, completing the proof.

\section{Proof of Lemma~\ref{2cpq}}
The proof is the same as the proof of Theorem $3$ in \cite{barany2014colourful}. Note that it is enough to show it for $q=k$. Moreover, by Theorem~\ref{bou1} and Theorem~\ref{bou2}, it suffices to show that for every blown-up copy $\mathcal F_1^*,\dots, \mathcal F_{p}^*$, there is some $\mathcal F_i^*$ containing a subfamily of size $\gamma^{-1} |\mathcal F_i^*|$ for some $\gamma >0$, whose elements contain a common point. Suppose $\beta(\alpha)$ is the function in colorful fraction Helly theorem for $(X,\mathcal C)$. Set $\delta = \binom{p}{k}^{-1}$ and $\gamma = (\beta(\delta))^{-1}$. 

Let $\mathcal H$ be a complete $p$-uniform $p$-partite hypergraph, whose vertices in the $i$-th class are the sets in $\mathcal F_i^*$ for $i\in [p]$. For any edge $e= (C_1, \dots, C_p) \in \mathcal H$ and $J \subseteq [p]$, denote the partial edge $(C_j : j \in J)$ by $e(J)$. For $I \in \binom{[p]}{k}$, let $\mathcal H(I)$ be a $k$-uniform $k$-partite hypergraph whose classes are $\mathcal F_i^*$, where $i\in [I]$, and $f=(C_i : i \in I)$ is an edge of $\mathcal H(I)$ if $\bigcap_{i \in I} C_i  \neq \emptyset$. Let $N_i = |\mathcal F_i|$.

\begin{claim}\label{counting}
    Some $\mathcal H(I)$ contains at least $\delta \prod_{i\in I} |N_i|$ edges.
\end{claim}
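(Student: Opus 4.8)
The plan is to exploit the second kind of colorful $(p,k)$ property of the blown-up families via a double-counting argument over the edges of $\mathcal H$ and their $k$-subsets. First I would observe that because $\mathcal F_1,\dots,\mathcal F_p$ have the second kind of colorful $(p,q)$ property and blowing up preserves it (choosing any system of representatives from the blown-up copies restricts to a choice of representatives from the originals, up to multiplicity), every edge $e=(C_1,\dots,C_p)$ of the complete $p$-partite hypergraph $\mathcal H$ has the property that some $k$ of its vertices share a common point; equivalently, for every edge $e$ there is at least one index set $I\in\binom{[p]}{k}$ with $e(I)\in\mathcal H(I)$. (One subtlety: if a blown-up family contains $k$ copies of the same set, then already a single set's $k$-fold repetition gives a trivially intersecting $k$-tuple inside one class; this degenerate case only helps and can be folded in, so I would dispatch it first and then assume each selection of representatives uses $k$ genuinely distinct sets, matching the original property.)

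Next I would count incidences between edges of $\mathcal H$ and pairs $(I, f)$ with $I\in\binom{[p]}{k}$ and $f$ an edge of $\mathcal H(I)$ that is contained in (i.e.\ equals the $I$-restriction of) the edge $e$. On one hand, the total number of edges of $\mathcal H$ is $\prod_{i=1}^p N_i$, and by the previous paragraph each one contributes at least $1$ to this count, so the incidence count is at least $\prod_{i=1}^p N_i$. On the other hand, a fixed edge $f$ of $\mathcal H(I)$ (using vertices from the classes indexed by $I$) extends to an edge of $\mathcal H$ in exactly $\prod_{i\in[p]\setminus I} N_i$ ways, so the incidence count equals $\sum_{I\in\binom{[p]}{k}} |\mathcal H(I)|\cdot\prod_{i\notin I} N_i$. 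Dividing through by $\prod_{i=1}^p N_i$ gives
\begin{align*}
    \sum_{I\in\binom{[p]}{k}} \frac{|\mathcal H(I)|}{\prod_{i\in I} N_i} \;\geq\; 1.
\end{align*}
Since there are exactly $\binom{p}{k}$ terms in the sum, by averaging some $I$ satisfies $|\mathcal H(I)|/\prod_{i\in I} N_i \geq \binom{p}{k}^{-1} = \delta$, i.e.\ $|\mathcal H(I)|\geq \delta\prod_{i\in I} N_i$, which is the claim.

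The only real obstacle is the bookkeeping around the blow-up and the degenerate repeated-set case: one must be careful that "some $k$ of them contain a common point" is applied to a genuine choice of one representative per class of $\mathcal F_i^*$, and that such a choice does indeed restrict to a valid instance of the property for the original families $\mathcal F_i$ (this is where the hypothesis that each $\mathcal F_i^*$ is a blow-up of $\mathcal F_i$, rather than an arbitrary family, is used). Once that is set up cleanly, the counting identity above is routine and the averaging step is immediate. After the claim, the argument finishes exactly as in \cite{barany2014colourful}: apply the colorful fractional Helly theorem for colorful $k$-tuples with parameter $\delta$ to the family $\mathcal H(I)$ to extract, in one of the $k$ classes $\mathcal F_i^*$ with $i\in I$, an intersecting subfamily of size at least $\beta(\delta)\,N_i = \gamma^{-1}|\mathcal F_i^*|$, and then invoke Theorem~\ref{bou2} and Theorem~\ref{bou1} to convert this into the bounded transversal asserted by the second kind of colorful $(p,q)$ theorem.
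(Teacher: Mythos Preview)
Your proof is correct and is essentially identical to the paper's: both count pairs $(e,f)$ with $e\in\mathcal H$ and $f=e(I)\in\mathcal H(I)$, use the colorful $(p,k)$ property to lower-bound this count by $\prod_i N_i$, rewrite it as $\sum_I |\mathcal H(I)|\prod_{j\notin I}N_j$, divide through, and average over $I$. Your aside about the ``degenerate repeated-set case'' is unnecessary---in the second kind of colorful $(p,q)$ property one selects a single set from each family, so the hypothesis passes to blow-ups immediately with no distinctness issue---but it does no harm to the argument.
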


\begin{proof}
    Let $N= N_1 \dots N_p$. Let $(e,f)$ be the pair such that $e \in \mathcal H$ and $f=e(I) \in \mathcal H(I)$. Since for every colorful $p$-tuples there are $k$ of them containing a common point, for every $e \in \mathcal H$ there is an $I \in \binom{[p]}{k}$ such that $e(I) \in \mathcal H(I)$. It implies that
    $$
    \begin{aligned}
        N & \leq \text{ number of such pairs } (e,f) \\
        & = \sum_{ I \in \binom{[p]}{k}} \sum_{f \in \mathcal H(I)} | \{e \in \mathcal H : f = e(I) \} | \\
        & = \sum_{ I \in \binom{[p]}{k}} \sum_{f \in \mathcal H(I)} \prod_{j \notin I} N_j \\
        & = N \sum_{ I \in \binom{[p]}{k}}  \frac{|\mathcal H(I)|}{ \prod_{i \in I} N_i}.
    \end{aligned}
    $$
    Hence we have $\frac{|\mathcal H(I)|}{ \prod_{i \in I} N_i} \geq \delta$ for some $I$, completing the proof.
\end{proof}

Then by colorful fractional Helly theorem for $(X,\mathcal C)$, some $\mathcal F_i^*$ contains an intersecting subfamily of size $\gamma^{-1} |\mathcal F_i^*|$, completing the proof.

\section{Proof of Lemma~\ref{lemma: partial colorful Helly number} and Lemma~\ref{lemma: colorful fractional Helly number}}

Note that it is enough to prove Lemma~\ref{lemma: colorful fractional Helly number}, since Lemma~\ref{lemma: colorful fractional Helly number} implies Lemma~\ref{lemma: partial colorful Helly number} by setting $N_{2pc}(m) = \frac{m}{\beta(1)}$, where $\beta(\alpha)$ is the corresponding function in the colorful fractional Helly theorem for $2$-tuples.

Let $\mathcal C_1, \mathcal C_2 \subseteq  \mathcal C_{\equiv} (P)$ be finite families of size $n_1,n_2$, respectively. Suppose that there are $\alpha n_1n_2$ intersecting colorful $2$-tuples. Note that there exists a level at which at least $\frac{\alpha}{d} n_1 n_2$ intersecting $2$-tuples intersect. Then by Theorem~\ref{cfh} for the case of $d=1$, one family contains a intersecting subfamily of size $\beta n$, where $\beta = 1-(1-\frac{\alpha}{d})^{1/2}$, completing the proof.

\section{Proofs of first three items of Theorem~\ref{theorem: Helly-type theorems}}
\subsection{Proof of \ref{item1} of Theorem~\ref{theorem: Helly-type theorems}}
We first prove that $r(P,\mathcal C_{\equiv} (P)) \leq 2d+1$, that is, for any subset $A \subseteq P$ with $|A| \geq 2d+1$, there is a partition $A = X \cup Y$ such that $\conv X \cap \conv Y \neq \emptyset$.

Since $|A| \geq 2d+1$, by the pigeonhole principle there are at least $3$ points in the same level, say $(x_1,i),(x_2,i),(x_3,i)$. Without loss of generality, we may assume that $x_1 \leq x_2 \leq x_3$. Then for arbitrary partition of $A'= A \setminus\{ (x_1,i),(x_2,i),(x_3,i)\}= X' \cup Y'$, let $X= X' \cup \{(x_2,i) \}$ and $Y= Y' \cup \{(x_1,i), (x_3,i) \}$. Note that $X,Y$ is the desired partition.

Then we prove that $r(P,\mathcal C_{\equiv} (P)) > 2d$, if $P$ contains at least $2$ points in every level. Consider $A= \bigcup_{i\in [d]} \{a_{i1}, a_{i2} \}$, where $a_{i1},a_{i2}$ are two points in the $i$-th level. Note that $A$ does not have a Radon partition. 

\subsection{Proof of \ref{item2} and \ref{item3} of Theorem~\ref{theorem: Helly-type theorems}}
For the Helly number and the colorful Helly number, it is enough to prove the lower bound for the special cases as stated in the Preliminaries.

For the Helly number, consider family $\mathcal F = \{C_1,\dots,C_{2d} \}$, where \[  C_k = \begin{cases}   
    \Big(\bigcup_{j\in [d], j\neq \lceil k/2 \rceil} \conv \{a_{j1},a_{j2} \} \Big) \cup \{a_{\lceil k/2 \rceil1} \}, & \text{ if $k$ is odd} , \\
     \Big( \bigcup_{j\in [d], j\neq \lceil k/2 \rceil} \conv \{a_{j1},a_{j2} \} \Big) \cup \{a_{\lceil k/2 \rceil2} \}, & \text{ if $k$ is even},
\end{cases} \]
and $a_{i1},a_{i2}$ are points in the $i$-th level, that is, $C_k$ contain some one point in the $\lceil k/2 \rceil$-th level and convex hulls of two points in the other levels. Note that every $2d-1$ of them intersect, but all of them do not intersect.

For the colorful Helly number, it is known that $h_c(X,\mathcal C) \geq h(X,\mathcal C)$. Hence, we have $h_c(X,\mathcal C) \geq 2d$, if $P$ contains at least $2$ points in every level.

\section{Discussion}\label{section: conclusion}
\subsection{Missed parts}
Except for the Helly-type results listed in Section~\ref{section: Helly-type notions}, one can verify that an analogous result of Theorem $4$ in \cite{barany2014colourful} also arises directly from the colorful fractional Helly theorem by the same combinatorial argument in the proof of Theorem $4$ in \cite{barany2014colourful}. 

Additionally, we also can prove an analogous result of Theorem $1.3$ in \cite{frankl2025helly} for separated $d$-intervals by taking the following $2d$ orders $<_i$ for $i\in [2d]$. For any two separated $d$-intervals $I_1,I_2$, when $i$ is odd, we say $I_1 <_i I_2$ if the left-endpoint of the $\lceil \frac{i}{2}\rceil$-th level of $I_1$ is to the right of the left-endpoint of the $\lceil \frac{i}{2}\rceil$-th level of $I_2$. When $i$ is even, we say $I_1 <_i I_2$ if the right-endpoint of the $\frac{i}{2}$-th level of $I_1$ is to the left of the right-endpoint of the $ \frac{i}{2}$-th level of $I_2$. Then we can prove some Helly-type theorems for the so-called monotone properties introduced in that paper.

While Theorem~\ref{tar} and its colorful version yield improved bounds for the $(p,q)$ theorem and the second kind of colorful $(p,q)$ theorem for the space of usual separated $d$-intervals, that is $(\mathbb R \times [d],\mathcal C_{\equiv} (\mathbb R \times [d]))$, obtaining the first kind of colorful $(p,q)$ theorem remains unknown within the framework of existing results for $d$-intervals.

However, the following fact is implicit in the proof of Lemma~\ref{lem2}: the nerve complex of a finite family $\mathcal C \subseteq \mathcal C_{\equiv} (P)$ can be represented as the nerve complex of a family of separated $d$-intervals by replacing each $C \in \mathcal C$ by $I(C)$, which is defined in the proof of Lemma~\ref{lem2}. Then it implies that Theorem~\ref{tar} and its variations also hold for $\mathcal C \subseteq \mathcal C_{\equiv} (P)$, although the original proof is topological and can not be applied for discrete version directly.

\subsection{Relation with axis-parallel boxes}\label{section: relation with axis-parallel boxes}
We may always regard the separated $d$-interval as a box in $\mathbb R^d$. The only thing that we need to be careful is that when we deal with an empty level, we need to regard the corresponding box as being ``far from'' the others. Then the $k$-intersecting of separated $d$-intervals looks like axis-parallel $(d-k)$-transversal, that is, a collection of axis-parallel $(d-k)$-flats that intersects all members.

If we only concern the case of $(\mathbb R \times [d], \mathcal C_{\equiv}(\mathbb R \times [d]))$, there is some result about the axis-parallel flats transversal for boxes due to Chakraborty, Ghosh and Nandi \cite{chakraborty2025stabbing}. If we concern the case of $(P,\mathcal C_{\equiv} (P))$, there is some difference between them. Specifically, we can not claim that Theorem~\ref{theorem: colorful helly for k-intersecting} implies the following analogous statement: given a set $L$ of axis-parallel $(d-k)$-flat and a finite family of axis-parallel boxes in $\mathbb R^d$. If every $2d-k+1$ members can be pierced by an axis-parallel $(d-k)$-flat from $L$, then all boxes can be pierced by an axis-parallel $(d-k)$-flat from $L$. 

The problem is that we can arbitrarily arrange the points in $P$ to form some $k$-intersecting, but we can not combine some $(d-k)$-flat to form a new $(d-k)$-flat. Then maybe it will be interesting to study these ``discrete'' problems for axis-parallel boxes.

\subsection{Fractional Helly number and dual VC-dimension}
In \cite{holmsen2024fractional}, Holmsen and Pat{\'a}kov{\'a} proved the following result, which is for the separable convexity space\footnote{Please refer to \cite{holmsen2024fractional} for the definitions.}.

\begin{theorem}[Holmsen and Pat{\'a}kov{\'a} \cite{holmsen2024fractional}]
    Let $(X, \mathcal C)$ be a separable convexity space with bounded Radon number. Suppose that the system of halfspaces has dual VC-dimension $d$. Then the fractional Helly number for $\mathcal C$ is at most $d+1$.
\end{theorem}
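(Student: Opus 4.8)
The plan is to reduce the assertion to a statement about the \emph{Leray number} of the nerve complexes of the convex sets, and then invoke the classical bridge from bounded Leray number to the fractional Helly theorem due to Kalai and Eckhoff (in the abstract setting, Alon--Kalai--Matou\v{s}ek--Meshulam; cf. the mechanism behind Theorem~\ref{theorem: optimal colorful fractional Helly for simplicial complexes}). Concretely, I would aim to prove that for every finite family $\mathcal F \subseteq \mathcal C$ the nerve $K(\mathcal F)$ is \emph{$d$-Leray}, i.e.\ every induced subcomplex has vanishing reduced homology in dimensions $\geq d$. Once this is in place, counting the top-dimensional faces of the nerve against the Upper Bound Theorem for $d$-Leray complexes yields $h_f(X,\mathcal C) \leq d+1$, exactly as in the classical proof that $h_f(\mathbb R^d, \mathcal C(\mathbb R^d)) = d+1$.

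First I would exploit separability: each convex set $C \in \mathcal C$ is an intersection of halfspaces (convex sets with convex complement), so the nerve of any family $\mathcal F$ and the homology of its induced subcomplexes are controlled by nerves of subfamilies of the halfspace system $\mathcal H$. Next I would bring in the bounded Radon number $r$: by a Levi-type inequality it forces the Helly number of $(X,\mathcal C)$ to be bounded in terms of $r$, and, more importantly, it gives quantitative control on how a collection of halfspaces that separates a given convex set from a given point can be ``spread out'', which is what keeps the auxiliary subfamilies of $\mathcal H$ uniformly bounded. The heart of the argument is then the claim that the dual VC-dimension bound $d$ on $\mathcal H$ forces the homology of induced subcomplexes of $K(\mathcal H)$ (hence of $K(\mathcal F)$) to vanish above dimension $d-1$. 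I would attempt this by a Mayer--Vietoris / deletion--contraction induction along a well-chosen halfspace $H$: splitting a family according to $H$ and its complement, a new homology class surviving in dimension $d$ should be convertible into a configuration of $d+1$ halfspaces realizing all $2^{d+1}$ membership patterns on suitable points, contradicting dual VC-dimension $\leq d$.

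\textbf{Main obstacle.} The delicate point is translating the purely combinatorial shattering condition ``dual VC-dimension $\leq d$'' into the topological vanishing statement at \emph{exactly} level $d$, rather than at some larger level $f(d)$ — the latter being essentially Matou\v{s}ek's theorem that a bounded dual shatter function already implies \emph{some} fractional Helly number. Obtaining the sharp constant $d+1$ is precisely where separability and the bounded Radon number must be used in tandem: separability to realize any offending homology class or minimal non-face of the nerve by an honest family of halfspaces, and the Radon bound to prune such a configuration down to a genuine witness of dual VC-dimension $\geq d+1$. A homology-free fallback I would keep in reserve is Matou\v{s}ek's counting scheme — a bounded dual shatter function leaves only polynomially many distinct $k$-fold intersection cells, so a positive fraction of intersecting $k$-tuples must reuse one cell, giving a large pierced subfamily — combined with a Radon-type pruning to lower $k$ to $d+1$; but I expect the Leray-number route to deliver the tight bound most cleanly.
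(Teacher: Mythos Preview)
This theorem is not proved in the paper under review: it is a result of Holmsen and Pat\'akov\'a, cited without proof in the concluding discussion (Section~\ref{section: conclusion}) solely to contrast its bound with the paper's direct computation $h_f(P,\mathcal C_{\equiv}(P))=2$. There is no proof here to compare your proposal against.

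On the proposal itself: your outline correctly identifies the target and the relevant background (Matou\v{s}ek's qualitative bound, the Leray-number route to fractional Helly), but the step you yourself flag as the ``main obstacle'' --- converting dual VC-dimension $\leq d$ into a sharp $d$-Leray statement for nerves --- is the entire content of the theorem, and you do not actually supply an argument for it. In particular, the Mayer--Vietoris / deletion--contraction scheme you sketch has no evident entry point for the separability or Radon hypotheses, yet both are genuinely needed to sharpen Matou\v{s}ek's bound to the exact value $d+1$. If you want to reconstruct the proof, consult the Holmsen--Pat\'akov\'a paper directly.
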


Since $(P,\mathcal C_{\equiv} (P))$, where $P \subseteq \mathbb R \times [d]$, is separable and has bounded Radon number, we attempted to apply it for getting fractional Helly number. However, the dual VC-dimension of the system of halfpsaces of $(P,\mathcal C_{\equiv} (P))$ is at least $\log_2 d$. It is far from the actual fractional Helly number $2$; refer to \ref{item4} of Theorem~\ref{theorem: Helly-type theorems}. Hence, this can serve as an example to show that the fractional Helly number is not always equal to the sum of dual VC-dimension of the system of halspaces and one.

\section*{Acknowledgments}
I thank Alexander Polyanskii for inspiring this research problem and Rahul Gangopadhyay for his comments on the manuscript.

\bibliographystyle{alpha}
\bibliography{references}

\end{document}